\documentclass[10pt,reqno]{amsart}
\usepackage{amsmath,amsopn,amssymb,amsthm}
\usepackage[cp1251]{inputenc}
\usepackage[english]{babel}
\usepackage[pagebackref,breaklinks=true,colorlinks=true,linkcolor=blue,citecolor=blue,urlcolor=blue]{hyperref}
\usepackage{graphicx}%
\usepackage{color}

\voffset -1.3cm
\hoffset -1.1cm
\textwidth 14.5cm
\textheight 22cm
\newtheorem{theorem}{Theorem}[section]

\newtheorem{lemma}[theorem]{Lemma}

\newtheorem{question}[theorem]{Question}

\newtheorem{remark}[theorem]{Remark}

\renewenvironment{proof}[1][Proof]{\noindent\textbf{#1.} }{\ \rule{0.5em}{0.5em}}

\begin{document}
\title[Parallel translations for a left invariant spray]{Parallel translations for a left invariant spray}
\author{Ming Xu}

\address{Ming Xu \newline
School of Mathematical Sciences,
Capital Normal University,
Beijing 100048,
P. R. China}
\email{mgmgmgxu@163.com}

\date{}

\begin{abstract}
In this paper, we study the left invariant spray geometry on a connected Lie group. Using the technique of invariant frames, we
find the ordinary differential equations on the Lie algebra describing for a left invariant spray structure the linearly parallel translations along a geodesic and the nonlinearly parallel translations along a smooth curve. In these equations, the connection operator plays an important role. Using linearly parallel translations, we provide alternative interpretations or proofs for some homogeneous curvature formulae. Concerning the nonlinearly ones,
we propose two questions in left invariant spray geometry. One question generalizes Landsberg Problem in Finsler geometry, and the other concerns the restricted holonomy group.

\textbf{Mathematics Subject Classification (2010)}: 53B40, 53C30, 53C60

\textbf{Key words}: connection operator, geodesic, holonomy group, invariant frame, invariant spray structure, parallel translation, Riemann curvature, spray vector field

\end{abstract}
\maketitle

\section{Introduction}
{\it Spray geometry} concerns a {\it spray structure} $\mathbf{G}$ on a smooth manifold $M$, which is a smooth vector field on $TG\backslash\{0\}$ with the standard local coordinate presentation
$\mathbf{G}=y^i\partial_{x^i}-2\mathbf{G}^i\partial_{y^i}$,
where $\mathbf{G}^i=\mathbf{G}^i(x,y)$ is positive 2-homogeneous for the $y$-entry \cite{Sh2001-1}. For example, when $\mathbf{G}$ is induced by a Finsler metric $F$, then
$\mathbf{G}^i=\tfrac14g^{il}([F^2]_{x^ky^l}y^k-[F^2]_{x^l})$ \cite{Sh2001-2}.
Many notions in Finsler geometry, like (nonconstant) geodesic, Riemann curvature,
linear and nonlinear parallel translations, are only relevant to $\mathbf{G}$, i.e., they are originated from spray geometry \cite{Be1926}. See \cite{HM2021}\cite{LMY2019}\cite{LS2018}\cite{Ya2011} for some recent progress in this field.

In this paper, we discuss a special class of homogeneous spray structure. A spray structure $\mathbf{G}$ on a connected Lie group $G$ is called left invariant, if it is preserved by all left translations \cite{HM2021}\cite{Xu2021}. Using a left invariant frame $\{\widetilde{U}_i,\partial_{u^i},\forall i\}$ on $TG$ (see Section \ref{subsection-2-2} below or Section 3.1 in \cite{Xu2021}), a left invariant spray structure
$\mathbf{G}$ can be presented as $\mathbf{G}=\mathbf{G}_0-\mathbf{H}$, where
$\mathbf{G}_0=u^i\widetilde{U}_i$ is the canonical bi-invariant spray structure, and $\mathbf{H}=\mathbf{H}^i\partial_{u^i}$ is a left invariant vector field on $TG\backslash0$ which is tangent to each $T_gG$. The restriction $\eta=\mathbf{H}|_{T_{e}G\backslash\{0\}}$ is called the
{\it spray vector field} associated to $\mathbf{G}$ \cite{Xu2021}. This notion was first proposed by L. Huang in homogeneous Finsler geometry \cite{Hu2015-1}. We usually present $\eta$ as smooth function from $\mathfrak{g}\backslash\{0\}$ to $\mathfrak{g}$ (see Section 2.3 below).

The philosophy of homogeneous geometry implies that, to explore a left invariant $\mathbf{G}$, we only need to observe the interaction between the dynamical system of $\eta$ and the Lie algebra structure of $\mathfrak{g}$. Following this thought, we find homogeneous curvature formulae for $\mathbf{G}$ (see (\ref{006}) below and Corollary 4.1 in \cite{Xu2021}) which generalizes those of L. Huang \cite{Hu2015-1}\cite{Hu2015-2} in homogeneous Finsler geometry,
and prove a correspondence between geodesics of $\mathbf{G}$ and  integral curves of $-\eta$ (see Theorem \ref{thm-2} below or Theorem D in \cite{Xu2021}).

As a continuation of this exploration, we switch in this paper to linearly and nonlinearly
parallel translations for $\mathbf{G}$. It turns out that the {\it connection operator} $N:(\mathfrak{g}\backslash\{0\})\times\mathfrak{g}\rightarrow
\mathfrak{g}$ plays an important role. This notion
was first defined for a homogeneous Finsler space by L. Huang, using fundamental tensor and Cartan tensor \cite{Hu2015-1}\cite{Hu2015-2}. He later pointed out another description (see (4) in \cite{Hu2017}), which implies that it can be generalized to homogeneous spray geometry. In particular, for a left invariant spray structure $\mathbf{G}$ with the spray vector field $\eta$, the connection operator is
\begin{equation}\label{015}
N(y,w)=\tfrac12D\eta(y,w)-\tfrac12[y,w]_\mathfrak{g},
\end{equation}
in which $D\eta(y,w)$ is the derivative of $\eta:\mathfrak{g}\backslash\{0\}\rightarrow\mathfrak{g}$
at $y$ in the direction of $w$. Here $[\cdot,\cdot]_\mathfrak{g}$ is the Lie bracket of $\mathfrak{g}=\mathrm{Lie}(G)$. The more usual notation $[\cdot,\cdot]$ is reserved for the canonical bracket between two smooth vector fields (see Theorem \ref{thm-1} and Remark \ref{remark} below).

Firstly, we consider linearly parallel translations along a smooth curve on $(G,\mathbf{G})$ and prove
\begin{theorem}\label{main-thm-1}
Let $G$ be a connected Lie group endowed with a left invariant spray structure $\mathbf{G}$  with the spray vector field $\eta$,
$c(t)$ a smooth curve on $(G,\mathbf{G})$ with nowhere-vanishing $\dot{c}(t)$, and $W(t)$ a vector field along $c(t)$. Denote $y(t)=(L_{c(t)^{-1}})_*(\dot{c}(t))\in\mathfrak{g}\backslash\{0\}$
and $w(t)=(L_{c(t)^{-1}})_*(W(t))\in\mathfrak{g}$. Then $W(t)$ is linearly parallel along $c(t)$ if and only if $w(t)$ is a solution of
\begin{equation}\label{011}
\tfrac{{\rm d}}{{\rm d}t}w(t)+N(y(t),w(t))+[y(t),w(t)]_\mathfrak{g}=0.
\end{equation}
\end{theorem}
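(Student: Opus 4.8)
The plan is to express the linear covariant derivative of $W(t)$ along $c(t)$ in the left invariant frame and then push it down to $\mathfrak{g}$ by $(L_{c(t)^{-1}})_*$. Recall that for a spray with local coefficients $\mathbf{G}^i$ the linear covariant derivative along $c(t)$, taken with reference vector $\dot c(t)$, is
\[
D_{\dot c}W=\Big(\tfrac{{\rm d}W^i}{{\rm d}t}+(\partial_{y^k}\mathbf{G}^i)(c,\dot c)\,W^k\Big)\partial_{x^i},
\]
i.e.\ the nonlinear connection $\partial_{y^k}\mathbf{G}^i$ evaluated at the reference $\dot c$ and acting linearly on $W$; by definition $W(t)$ is linearly parallel iff $D_{\dot c}W\equiv 0$. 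The nowhere-vanishing hypothesis on $\dot c(t)$ is exactly what keeps this expression (and hence $\eta$, $D\eta$) well defined, with $y(t)\in\mathfrak{g}\setminus\{0\}$ throughout. So the whole theorem reduces to the single identity $(L_{c(t)^{-1}})_*(D_{\dot c}W)=\dot w+N(y,w)+[y,w]_\mathfrak{g}$, which I would prove by computing the left-hand side.

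The computation is organized by the splitting $\mathbf{G}=\mathbf{G}_0-\mathbf{H}$. Comparing vertical parts in $\mathbf{G}=y^i\partial_{x^i}-2\mathbf{G}^i\partial_{y^i}$ gives, at the level of spray coefficients, $\mathbf{G}^i=\mathbf{G}_0^i+\tfrac12\mathbf{H}^i$; pinning down this bookkeeping is the source of the factor $\tfrac12$ attached to $D\eta$. Accordingly $D=D^0+\tfrac12\mathcal{H}$, where $D^0$ is the covariant derivative of the canonical bi-invariant spray $\mathbf{G}_0$ and $\mathcal{H}(\dot c,W)=(\partial_{y^k}\mathbf{H}^i)(c,\dot c)W^k\,\partial_{x^i}$ is the tensorial difference coming from $\mathbf{H}$.

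For the canonical piece I use that $\mathbf{G}_0=u^i\widetilde{U}_i$ is the quadratic spray associated with the bi-invariant connection $\nabla^0$ determined by $\nabla^0_{U_i}U_j=\tfrac12[U_i,U_j]$ (its geodesics are the left translates of one-parameter subgroups, i.e.\ Theorem \ref{thm-2} with $\eta\equiv 0$). Writing $W=w^iU_i(c)$ and $\dot c=y^jU_j(c)$, and using $(L_{c^{-1}})_*\nabla^0_{\dot c}U_i=\tfrac12[y,e_i]_\mathfrak{g}$, the derivative $D^0_{\dot c}W$ pushes down to $\dot w+\tfrac12[y,w]_\mathfrak{g}$. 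For the $\mathbf{H}$ piece, since $\mathbf{H}$ is left invariant the tensor $\mathcal{H}$ is left invariant, so its pushforward to $\mathfrak{g}$ is $t$-independent and may be read off at $e$, where $\mathbf{H}|_{T_eG\setminus\{0\}}=\eta$ gives $(L_{c^{-1}})_*\mathcal{H}(\dot c,W)=D\eta(y,w)$. Hence $\tfrac12\mathcal{H}$ contributes $\tfrac12 D\eta(y,w)$, and summing the two pieces yields $\dot w+\tfrac12 D\eta(y,w)+\tfrac12[y,w]_\mathfrak{g}$, which is exactly $\dot w+N(y,w)+[y,w]_\mathfrak{g}$ by (\ref{015}).

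The main obstacle is the clean separation of the naive derivative $\dot w$ from the connection corrections when a covariant derivative is pushed down to $\mathfrak{g}$: a bare coordinate computation entangles $\dot W$ with the derivatives of the frame $U_i$ along $c$, and these must be reassembled into $\tfrac12[y,w]_\mathfrak{g}$. The invariant way to avoid this is to run the whole argument inside the frame $\{\widetilde{U}_i,\partial_{u^i}\}$ on $TG$, whose bracket relations (Theorem \ref{thm-1}) package the structure constants and the canonical connection directly; the cost is verifying those bracket identities together with the identification of $D^0$ with $\nabla^0$. As an internal check, the geodesic case $W=\dot c$ gives $[y,w]_\mathfrak{g}=0$ and, by $2$-homogeneity of $\eta$, $D\eta(y,y)=2\eta(y)$, so the equation collapses to $\dot y+\eta(y)=0$, recovering the geodesic correspondence of Theorem \ref{thm-2}.
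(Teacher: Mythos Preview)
Your proposal is correct and takes a genuinely different route from the paper's own argument. The paper proceeds by a direct local-coordinate computation: it first establishes a lemma expressing $D_{\dot c}W$ in the left invariant frame (Lemma~\ref{lemma-5}), plugging in the explicit formula for $N^k_j$ from Lemma~\ref{lemma-1} and simplifying the resulting mess of $A^i_j$, $B^i_j$ terms via the structure-constant identity of Lemma~\ref{lemma-0}; only then is the pushforward to $\mathfrak{g}$ taken. Your approach instead exploits the splitting $\mathbf{G}=\mathbf{G}_0-\mathbf{H}$ from the outset, treating the canonical piece via the known identification of the linear connection of $\mathbf{G}_0$ with the symmetric Cartan connection $\nabla^0_XY=\tfrac12[X,Y]$, and treating the $\mathbf{H}$ piece by left invariance alone. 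This is more conceptual and avoids the index gymnastics of (\ref{005}), at the price of having to justify the $D^0\leftrightarrow\nabla^0$ identification separately (a classical fact, but not stated in the paper). Two small points of care: your displayed identity $\mathbf{G}^i=\mathbf{G}_0^i+\tfrac12\mathbf{H}^i$ silently mixes standard and left-invariant frames (there is an $A^i_j$ factor in between), though your subsequent pushforward reasoning is unaffected; and your parenthetical pointer to ``Theorem~\ref{thm-1}'' for the frame bracket relations is a misreference---those relations live in the preliminaries of \cite{Xu2021}, not in Theorem~\ref{thm-1}. The sanity check via $W=\dot c$ recovering Theorem~\ref{thm-2} is a nice touch that the paper does not make explicit.
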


As an application of Theorem \ref{main-thm-1}, the connection operator and Riemann curvature operators for a left invariant $\mathbf{G}$
can be alternatively described by
\begin{theorem} \label{thm-1}
Let $c(t)$ be a geodesic on the connected Lie group $G$, for the left invariant spray structure $\mathbf{G}$, and $y(t)=(L_{c(t)^{-1}})_*(\dot{c}(t))$ the corresponding integral curve for $-\eta$, where $\eta$ is the spray vector field associated with $\mathbf{G}$. Denote $w(t)$ the vector field along $y(t)$ which corresponds
to a linearly parallel vector field $W(t)$ along $c(t)$, i.e., $w(t)=(L_{c(t)^{-1}})_*(W(t))$. Then $N(t)=N(y(t),w(t))$ and $R(t)=R_{y(t)}(w(t))$, where $N(\cdot,\cdot)$ is the connection operator and $R_\cdot\cdot$ is the Riemann curvature, are vector fields along $y(t)$ determined by
\begin{equation} \label{100}
N(t)=-[\eta,w(t)]\quad\mbox{and}\quad R(t)=[\eta,N(t)]=-[\eta,[\eta,w(t)]].
\end{equation}
\end{theorem}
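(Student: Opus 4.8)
The plan is to transport both identities in (\ref{100}) into explicit computations on $\mathfrak{g}\backslash\{0\}$, where the spray vector field $\eta$ and its bracket with another field are concrete. The first task is to fix the meaning of $[\eta,w(t)]$: I read $w(t)$ as the restriction to the integral curve $y(t)$ of an arbitrary smooth extension $V$ defined near $y(t)$ in $\mathfrak{g}\backslash\{0\}$ with $V(y(t))=w(t)$, and I read $[\eta,w(t)]$ as the value at $y(t)$ of the vector-field bracket $[\eta,V]$. Writing the bracket on the vector space $\mathfrak{g}$ as $[\eta,V](y)=DV(y,\eta(y))-D\eta(y,V(y))$ and using the defining property $\dot y(t)=-\eta(y(t))$ of the integral curve for $-\eta$, the first term becomes $DV(y(t),-\dot y(t))=-\tfrac{{\rm d}}{{\rm d}t}V(y(t))=-\dot w(t)$. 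Hence, along $y(t)$,
\[
[\eta,w(t)]=-\dot w(t)-D\eta(y(t),w(t)).
\]
Since only the derivative of $V$ in the single direction $\dot y(t)$ enters, this is independent of the chosen extension and the notation is unambiguous. I would record once the general identity $[\eta,V]=-\dot V-D\eta(y,V)$, valid along any integral curve of $-\eta$, to be reused with $V=w(t)$ and then $V=N(t)$.

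The first identity in (\ref{100}) then follows by combining this with the parallel equation. Inserting the connection operator (\ref{015}) into (\ref{011}) and specializing to the geodesic $c(t)$ gives $\dot w(t)+\tfrac12 D\eta(y(t),w(t))+\tfrac12[y(t),w(t)]_\mathfrak{g}=0$, which is exactly equivalent to $\dot w(t)+D\eta(y(t),w(t))=\tfrac12 D\eta(y(t),w(t))-\tfrac12[y(t),w(t)]_\mathfrak{g}=N(t)$. Comparing with the boxed bracket formula $[\eta,w(t)]=-\dot w(t)-D\eta(y(t),w(t))$ yields $N(t)=-[\eta,w(t)]$, as claimed. I emphasize that this half uses only Theorem \ref{main-thm-1} and the definition (\ref{015}); it is essentially a one-line consequence once the bracket is computed.

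For the second identity the equality $[\eta,N(t)]=-[\eta,[\eta,w(t)]]$ is immediate after substituting $N(t)=-[\eta,w(t)]$, so the real content is $R(t)=[\eta,N(t)]$. Applying the general bracket identity with $V=N(t)$ gives $[\eta,N(t)]=-\dot N(t)-D\eta(y(t),N(t))$, where $\dot N(t)=\tfrac{{\rm d}}{{\rm d}t}N(y(t),w(t))$ brings in a second-derivative term $D^2\eta(y(t),\dot y(t),w(t))=-D^2\eta(y(t),\eta(y(t)),w(t))$, the term $N(y(t),\dot w(t))$, and a derivative of the Lie bracket $[\cdot,\cdot]_\mathfrak{g}$. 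It then remains to verify that this expression equals the Riemann curvature $R_{y(t)}(w(t))$, for which I would invoke the homogeneous curvature formula (\ref{006}) (Corollary 4.1 in \cite{Xu2021}) and substitute both the geodesic equation $\dot y=-\eta$ and the parallel equation (\ref{011}).

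The main obstacle is precisely this last matching: showing that the homogeneous curvature formula (\ref{006}), after substituting $\dot y=-\eta$ and eliminating $\dot w$ and $\dot N$ via (\ref{011}), collapses exactly to $-\dot N(t)-D\eta(y(t),N(t))$. The bookkeeping is delicate because the $D^2\eta$ terms generated by differentiating $N$ must combine with the quadratic-in-$D\eta$ and Lie-algebraic terms in the curvature formula, and the factors of $\tfrac12$ from (\ref{015}) have to be tracked consistently. Once these cancellations are organized, the identity $R(t)=[\eta,N(t)]=-[\eta,[\eta,w(t)]]$ follows, completing the proof.
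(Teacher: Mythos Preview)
Your proposal is correct and follows essentially the same route as the paper: compute the bracket $[\eta,V]$ along the integral curve of $-\eta$ as $-\dot V-D\eta(y,V)$, apply this first with $V=w(t)$ together with the parallel equation (\ref{011}) to obtain $N(t)=-[\eta,w(t)]$, and then with $V=N(t)$, comparing the result with the homogeneous curvature formula (\ref{006}). The one organizational difference is that the paper carries out the ``delicate bookkeeping'' you anticipate by keeping everything packaged in terms of $N$ and $DN(\eta,y,w)$ rather than expanding into $D\eta$ and $D^2\eta$; since $D\eta(y,\cdot)=2N(y,\cdot)+[y,\cdot]_\mathfrak{g}$, the substitution $D\eta(y,N(t))=2N(y,N(t))+[y,N(t)]_\mathfrak{g}$ makes the match with (\ref{006}) immediate and avoids the second-derivative cancellations you were bracing for.
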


\begin{remark}\label{remark}
The vector fields $w(t)$ and $N(t)$ are smooth vector field along an integral of $-\eta$. So $[\eta,w(t)]=-[-\eta,w(t)]$ and $[\eta,N(t)]=-[-\eta,N(t)]$ in (\ref{100}) are in deed Lie derivatives. Notice that they can be calculated as the canonical bracket between smooth vector field on a manifold, as they are presented in Theorem \ref{thm-1}.
To be precise, let $X$ be a smooth vector field on $M$ and $Y(t)$ a smooth vector field along an integral curve $c(t)$ for $X$, then $[X,Y(t)]$ is a well defined smooth vector field along $c(t)$. When $c(t)$ is not constant, we may locally extend $Y(t)$ to a smooth vector field $Z$ on $M$, then $[X,Y(t)]=[X,Z]|_{c(t)}$ is independent of the extension. Using local coordinate, the bracket between $X=X^i\partial_{x^i}$ and
$Y(t)=Y^i(t)\partial_{x^i}|_{c(t)}$ can be presented as
\begin{equation}\label{007}
[X,Y(t)]=(\tfrac{{\rm d} Y^i(t)}{{\rm d}t}\partial_{x^i}-Y^i(t)\tfrac{\partial}{\partial x^i}X^j\partial_{x^j})|_{c(t)}.
\end{equation}
Notice that when $c(t)$ is constant, (\ref{007}) can still be used to
calculate $[X,Y(t)]$, which is independent of the choice of
local coordinate.
\end{remark}

Theorem \ref{main-thm-1} (together with Theorem D in \cite{Xu2021}) can provide shortcuts to other curvature formulae of $\mathbf{G}$ as well. See Section \ref{subsection-2-3} for some examples.

Nextly, we consider nonlinearly parallel translations along a smooth curve on $(G,\mathbf{G})$ and prove
\begin{theorem}\label{main-thm-2}
Let $G$ be a connected Lie group endowed with a left invariant spray structure $\mathbf{G}$, $c(t)$ a smooth curve on $G$ and $Y(t)$ a nowhere vanishing vector field along $c(t)$. Denote $w(t)=(L_{c(t)^{-1}})_*(\dot{c}(t))$ and
$y(t)=(L_{c(t)^{-1}})_*(Y(t))$. Then $Y(t)$ is nonlinearly parallel along $c(t)$ iff $y(t)$ is a solution of
\begin{equation}\label{012}
\tfrac{{\rm d}}{{\rm d}t}y(t)+N(y(t),w(t))=0.
\end{equation}
\end{theorem}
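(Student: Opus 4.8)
The plan is to realize nonlinear parallelism as horizontality for the Ehresmann (nonlinear) connection that $\mathbf{G}$ induces on $TG\backslash\{0\}$, and then transcribe the horizontality condition into the invariant frame. By definition $Y(t)$ is nonlinearly parallel along $c(t)$ precisely when the lifted curve $\widetilde{c}(t)=(c(t),Y(t))$ is horizontal, i.e. $\dot{\widetilde{c}}(t)$ lies in the horizontal subspace that $\mathbf{G}$ determines at $\widetilde{c}(t)$. Since $\dot{\widetilde{c}}(t)$ projects to $\dot{c}(t)$ under $TG\to G$, and the horizontal lift of $\dot{c}(t)$ is the unique horizontal vector projecting to it, horizontality is equivalent to $\dot{\widetilde{c}}(t)$ being that horizontal lift; comparing their vertical ($\partial_{u^k}$) components will yield (\ref{012}), and since this is an equivalence it settles both directions at once.

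To compute these components I would use the trivialization $TG\cong G\times\mathfrak{g}$, $v\in T_gG\mapsto(g,u)$ with $u=(L_{g^{-1}})_*(v)$, and write $U_i^{\mathrm{t}}$ for the vector field on $TG$ that differentiates along the left invariant field $U_i$ in the $G$-direction while fixing $u$. In these terms the invariant frame of \cite{Xu2021} is $\widetilde{U}_i=U_i^{\mathrm{t}}-\tfrac12 c^k_{ij}u^j\partial_{u^k}$, where $[e_i,e_j]_\mathfrak{g}=c^k_{ij}e_k$; indeed this is the horizontal lift of $U_i$ for $\mathbf{G}_0$, whose linear connection is the bi-invariant $\nabla^0_{U_i}U_j=\tfrac12[U_i,U_j]_\mathfrak{g}$, and one checks $u^i\widetilde{U}_i=u^iU_i^{\mathrm{t}}=\mathbf{G}_0$. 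The decisive step is to pass from $\mathbf{G}_0$ to $\mathbf{G}=\mathbf{G}_0-\mathbf{H}$: since $\mathbf{H}=\eta^k\partial_{u^k}$ is vertical and its coefficients depend only on $u$, the vertical part of $\mathbf{G}$ relative to $\mathbf{G}_0$ is $-\eta^k\partial_{u^k}$, so the spray coefficients shift by $\tfrac12\eta^k$ and each horizontal lift shifts by $-\tfrac12(\partial\eta^k/\partial u^i)\partial_{u^k}$, the non-coordinate corrections vanishing because $[U_i^{\mathrm{t}},\partial_{u^k}]=0$ and $\eta=\eta(u)$. Hence the horizontal lift of $U_i$ for $\mathbf{G}$ is
\begin{equation*}
U_i^{H}=\widetilde{U}_i-\tfrac12\frac{\partial\eta^k}{\partial u^i}\partial_{u^k}.
\end{equation*}

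The conclusion then follows by inspection. Along $\widetilde{c}(t)=(c(t),y(t))$ one has $\dot{\widetilde{c}}(t)=w^i(t)U_i^{\mathrm{t}}+\dot{y}^k(t)\partial_{u^k}$, whereas the horizontal lift of $\dot{c}(t)=w^i(t)U_i$ at the fibre point $y(t)$ is $w^i(t)U_i^{H}$, whose vertical part, read as an element of $\mathfrak{g}$ via $\partial_{u^k}\leftrightarrow e_k$, equals $\tfrac12[y,w]_\mathfrak{g}-\tfrac12 D\eta(y,w)=-N(y,w)$ by (\ref{015}). Equating vertical parts gives $\dot{y}(t)=-N(y(t),w(t))$, which is exactly (\ref{012}). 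As a check, taking $Y=\dot{c}$ (so $y=w$) and using $2$-homogeneity of $\eta$, one has $N(y,y)=\tfrac12 D\eta(y,y)=\eta(y)$, and (\ref{012}) collapses to $\dot{y}=-\eta(y)$, recovering the geodesic correspondence of Theorem \ref{thm-2}.

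I expect the main obstacle to be the middle step --- determining the horizontal distribution of $\mathbf{G}$ inside the non-coordinate invariant frame and keeping every structure-constant correction in its place. The point to watch is that the bracket in $N$ arises solely from the vertical part $-\tfrac12 c^k_{ij}u^j\partial_{u^k}$ of $\widetilde{U}_i$, evaluated at the fibre $y$ and contracted with $w$, which produces $+\tfrac12[y,w]_\mathfrak{g}$; this is precisely where the nonlinear case parts ways with the linear one of Theorem \ref{main-thm-1}, where the analogous contraction is taken at the velocity and enters with the opposite sign, leaving the residual $[y,w]_\mathfrak{g}$ visible in (\ref{011}). One must also verify that the deviation $\mathbf{H}$ introduces no further bracket terms, so that only the homogeneous term $\tfrac12 D\eta(y,w)$ survives from it and (\ref{012}) takes its clean form.
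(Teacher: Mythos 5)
Your proposal is correct in substance and reaches (\ref{012}) by a genuinely different route from the paper. The paper quotes the horizontal-lift formula in the invariant frame (Lemma \ref{lemma-3}, i.e.\ Lemma 3.2 of \cite{Xu2021}), decomposes $\widetilde{\dot{c}(t)}^{\mathcal H}$ through the right-invariant frame via part (2) of Lemma \ref{lemma-2}, identifies the right-invariant summand with $\partial_t$ in the global coordinates $(t,u^1,\dots,u^n)$ on the submanifold $S=\cup_t(T_{c(t)}G\backslash\{0\})$ (Lemma \ref{lemma-4}), and reads off the ODE from the integral-curve equations. You instead work in the left trivialization $TG\cong G\times\mathfrak{g}$ with the product frame $\{U_i^{\mathrm t},\partial_{u^k}\}$, obtain the $\mathbf{G}$-horizontal lift by perturbing the $\mathbf{G}_0$-horizontal lift (justified through the bi-invariant connection $\nabla^0$) by the vertical field $\mathbf{H}$, and compare the velocity $w^i(t)U_i^{\mathrm t}+\dot{y}^k(t)\partial_{u^k}$ of the lifted curve with $w^i(t)U_i^{H}$ pointwise. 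Your final formula
\begin{equation*}
U_i^{H}=U_i^{\mathrm t}-\tfrac12 c^k_{ij}u^j\partial_{u^k}-\tfrac12\tfrac{\partial\eta^k}{\partial u^i}\partial_{u^k}
\end{equation*}
coincides with the paper's $\widetilde{U}_i^{\mathcal H}$ of Lemma \ref{lemma-3} rewritten in the trivialization, so equating vertical parts does yield $\dot{y}=-N(y,w)$ by (\ref{015}), and the equivalence holds in both directions. What your route buys: no right-invariant fields, no submanifold $S$, and a pointwise comparison valid for an arbitrary smooth curve, so the paper's opening reduction (simple curve, nonvanishing $\dot{c}$, then continuity) is not needed. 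What the paper's route buys: it reuses an already-established lemma of \cite{Xu2021} instead of re-deriving the horizontal distribution.

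One assertion must be corrected, though it does not damage the argument. The displayed field above with coefficient $\tfrac12$ is \emph{not} the invariant frame (complete lift) $\widetilde{U}_i$ of \cite{Xu2021}: the complete lift is $\widetilde{U}_i=U_i^{\mathrm t}-c^k_{ij}u^j\partial_{u^k}$, with coefficient $1$. Indeed, the flow of $U_i$ is $R_{\exp(te_i)}$, so the flow of $\widetilde{U}_i$ in the trivialization is $(g,u)\mapsto(g\exp(te_i),\mathrm{Ad}(\exp(-te_i))u)$, whose vertical velocity is $-[e_i,u]_{\mathfrak g}$; equivalently, only with coefficient $1$ does $[\widetilde{U}_i,\widetilde{U}_j]=c^k_{ij}\widetilde{U}_k$ hold in general. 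What you actually wrote down is the $\mathbf{G}_0$-horizontal lift $\widetilde{U}_i^{\mathcal H}$ (Lemma \ref{lemma-3} with $\mathbf{H}=0$), which differs from $\widetilde{U}_i$ by $+\tfrac12 c^k_{ij}u^j\partial_{u^k}$. Since your derivation only uses the $\nabla^0$-interpretation of this object and never any property of the complete lift (the check $u^i\widetilde{U}_i=u^iU_i^{\mathrm t}=\mathbf{G}_0$ cannot distinguish the two, as the correction dies under the antisymmetric contraction), the misidentification is harmless; simply delete or correct the claim that this field is ``the invariant frame of \cite{Xu2021}.''
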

When $w(t)\equiv w$ is constant, i.e., $c(t)=\exp tw$ is a one-parameter subgroup of $G$, the equation (\ref{012}) generates a one-parameter subgroup of diffeomorphisms on $\mathfrak{g}\backslash\{0\}$. So Theorem \ref{main-thm-2} has the following immediate consequence.

\begin{theorem}\label{cor}
Let $G$ be a connected Lie group endowed with a left invariant spray structure $\mathbf{G}$, and $c(t)=\exp tw$ for any $w\in\mathfrak{g}$ a one-parameter subgroup of $G$.
Denote $\mathbf{P}^{\mathrm{nl}}_{c(0),c(t);c}$ the nonlinear parallel translation along $c(\cdot)$ from $c(0)$ to $c(t)$. Then
$\rho_t=(L_{c(t)^{-1}})_*\circ\mathbf{P}^{\mathrm{nl}}_{c(0),c(t);c}$ is the one-parameter subgroup of diffeomorphisms generated by the smooth vector field $-N(\cdot,w)$ on $\mathfrak{g}\backslash\{0\}$.
\end{theorem}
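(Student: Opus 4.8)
The plan is to reduce Theorem~\ref{cor} to the autonomous special case of the nonlinear parallel translation equation~(\ref{012}). First I would verify that for the one-parameter subgroup $c(t)=\exp tw$ the left-translated velocity is constant: differentiating $c(t+s)=c(t)c(s)$ in $s$ at $s=0$ gives $\dot c(t)=(L_{c(t)})_*w$, whence $w(t):=(L_{c(t)^{-1}})_*(\dot c(t))\equiv w$ for all $t$. Substituting $w(t)\equiv w$ into~(\ref{012}), Theorem~\ref{main-thm-2} tells us that a nowhere-vanishing $Y(t)$ along $c(t)$ is nonlinearly parallel if and only if its left-translation $y(t)=(L_{c(t)^{-1}})_*(Y(t))$ solves the autonomous ordinary differential equation
\begin{equation*}
\tfrac{{\rm d}}{{\rm d}t}y(t)=-N(y(t),w).
\end{equation*}
By the formula~(\ref{015}) the map $y\mapsto N(y,w)$ is smooth on $\mathfrak{g}\backslash\{0\}$, since $\eta$, and hence $D\eta(\cdot,w)$, is smooth there; thus $-N(\cdot,w)$ is a genuine smooth vector field on $\mathfrak{g}\backslash\{0\}$ and its flow is defined.

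Next I would match the two sides by tracking a single initial vector. Since $c(0)=e$, the map $L_{c(0)^{-1}}=L_e$ is the identity, so $T_{c(0)}G=\mathfrak{g}$ and $\rho_0=\mathrm{id}$. Fix $y_0\in\mathfrak{g}\backslash\{0\}$, set $Y_0=y_0\in T_{c(0)}G$, and let $Y(t)=\mathbf{P}^{\mathrm{nl}}_{c(0),c(t);c}(Y_0)$ be its nonlinear parallel transport. Then $y(t):=(L_{c(t)^{-1}})_*(Y(t))=\rho_t(y_0)$ is, by the previous step, the unique solution of $\dot y=-N(y,w)$ with $y(0)=y_0$. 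In other words, for every $y_0$ the curve $t\mapsto \rho_t(y_0)$ is exactly the integral curve of $-N(\cdot,w)$ through $y_0$, which is precisely the defining property of the flow of $-N(\cdot,w)$. Hence $\rho_t$ coincides with the time-$t$ flow map of $-N(\cdot,w)$, and the one-parameter subgroup property is then automatic: the flow $\Phi_t$ of an autonomous smooth vector field satisfies $\Phi_0=\mathrm{id}$ and $\Phi_{s+t}=\Phi_s\circ\Phi_t$ on its domain by uniqueness of solutions, and each $\Phi_t$ is a diffeomorphism onto its image.

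The only genuine care needed, and the reason this is a corollary rather than a triviality, lies in the bookkeeping of the initial condition and the constancy of $w(t)$: one must use $c(0)=e$ to see that the input of $\rho_t$ lives in $T_{c(0)}G\backslash\{0\}=\mathfrak{g}\backslash\{0\}$ with no spurious left-translation, and use the one-parameter subgroup structure to see that~(\ref{012}) genuinely becomes autonomous. A secondary point, if global statements are intended, is the domain of definition: the flow (equivalently, the nonlinear parallel translation along $c$) need not be complete, so $\rho_t$ should be understood as the possibly-local one-parameter group generated by $-N(\cdot,w)$, defined for $t$ in the maximal existence interval of the corresponding integral curves.
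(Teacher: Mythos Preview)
Your argument is correct and follows exactly the route the paper takes: the paper does not give a separate proof but simply notes that when $c(t)=\exp tw$ one has $w(t)\equiv w$, so equation~(\ref{012}) becomes the autonomous ODE $\dot y=-N(y,w)$ whose flow is $\rho_t$. Your write-up just makes explicit the bookkeeping (the identification $T_{c(0)}G=\mathfrak{g}$, the constancy of $w(t)$, and the identification of $\rho_t$ with the flow) that the paper leaves implicit.
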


Denote $\mathfrak{H}$ the Lie algebra
generated by the space
$\mathcal{N}=\{N(\cdot,v),\forall v\in\mathfrak{g}\}$,
using the canonical bracket between two smooth vector fields on $\mathfrak{g}\backslash\{0\}$,
for a left invariant spray structure $\mathbf{G}$ on the Lie group $G$. The importance of $\mathfrak{H}$ in homogeneous spray and Finsler geometries is implied by
Theorem \ref{cor}, i.e., $\mathfrak{H}$ contains all the information for the nonlinear parallel translations on $(G,\mathbf{G})$.
We propose two questions (see Question \ref{que-1} and Question \ref{que-2} in Section \ref{subsection-4-2}).
The first question concerns if we always have $\dim\mathfrak{H}=+\infty$ when
$\mathbf{G}$ is not affine. This question can be viewed as a generalization of Landsberg Problem \cite{Ma1996} in Finsler geometry. The second question concerns the relation between $\mathfrak{H}$ and the restricted holonomy group $\mathrm{Hol}_0(G,\mathbf{G})$. Notice that when $\mathbf{G}$
is induced by a left invariant Riemannian metric, we can often get
$\mathfrak{H}=\mathrm{Lie}(\mathrm{Hol}_0(G,\mathbf{G}))$
\cite{Ko1955}\cite{Ko1957}. However, in Finsler or spray geometry, both $\mathfrak{H}$ and $\mathrm{Hol}_0(G,\mathbf{G})$ might have
infinite dimensions \cite{HMM2020}, and then the second question becomes much harder.

This paper is organized as following. In Section 2, we summarize some necessary notions and techniques. In Section 3, we prove Theorem \ref{main-thm-1} and Theorem \ref{thm-1}. In Section 4, we prove Theorem \ref{main-thm-2} and propose two questions.
\section{Preliminaries}
\subsection{Spray structure and parallel translation}
In this subsection, we summarize some fundamental knowledge in  \cite{Sh2001-1} on spray geometry.

A {\it spray structure} on a smooth manifold $M$ is
a smooth tangent vector field $\mathbf{G}$ on the slit tangent bundle $TM\backslash0$, which can be locally presented as
\begin{equation}\label{000}
\mathbf{G}=y^i\partial_{x^i}-2\mathbf{G}^i\partial_{y^i}
\end{equation}
for any {\it standard local coordinate} $(x^i,y^i)$, i.e., $x=(x^i)\in M$ and
$y=y^i\partial_{x^i}\in T_xM$, such that each $\mathbf{G}^i=\mathbf{G}^i(x,y)$ is positive 2-homogeneous for the $y$-entry.
A smooth curve $c(t)$ in $M$ with nonvanishing  $\dot{c}(t)$ everywhere is called a {\it geodesic} for
$\mathbf{G}$, if its lifting $(c(t),\dot{c}(t))$ in $TM\backslash0$ is an integral curve of $\mathbf{G}$.
%That means for any standard local coordinate, $c(t)=(c^i(t))$  satisfies the ordinary differential equation system
%$$\ddot{c}^i(t)+2\mathbf{G}^i(c(t),\dot{c}(t))=0,\quad\forall i.$$

Following (\ref{000}),
we denote
$$N^i_j=\tfrac{\partial\mathbf{G}^i}{\partial{y^j}}
\quad\mbox{and}\quad
\delta_{x^i}=\partial_{x^i}-N^i_j\partial_{y^j}.$$

Let $W(t)=z^i(t)\partial_{x^i}|_{c(t)}$ be a smooth vector field along the smooth curve $c(t)$ and we assume $\dot{c}(t)$ is nonvanishing everywhere. The {\it linearly covariant derivative} $D_{\dot{c}(t)}W(t)$  is the following smooth vector field along $c(t)$,
\begin{equation}\label{004}
D_{\dot{c}(t)}W(t)=(\tfrac{{\rm d} z^i(t)}{{\rm d}t}+z^j(t)N^i_j(c(t),\dot{c}(t)))\partial_{x^i}|_{c(t)}.
\end{equation}
We say $W(t)$ is {\it linearly parallel} along $c(t)$ if $D_{\dot{c}(t)}W(t)\equiv0$. For any initial value $w\in T_{c(t_0)}M$, there exists a unique linearly parallel $W(t)$
globally along $c(t)$. This fact is implies by the existence and uniqueness theorem for ordinary differential equation with initial value, together with the smoothness and positive 1-homogeneity
of $N^i_j$'s.

Suppose $c(t)$ with $t\in[a,b]$, $c(a)=p$ and $c(b)=q$ is a smooth curve on $M$ with nonvanishing $\dot{c}(t)$ everywhere. Then
the {\it linear parallel translation} $\mathbf{P}^{\mathrm{l}}_{p,q;c}:T_pM\rightarrow T_qM$ is defined by $\mathbf{P}^{\mathrm{l}}_{p,q;c}(w)=W(b)$, where $W(t)$ is the linearly parallel vector field along $c(t)$ satisfying $W(a)=w$.

Using the {\it nonlinearly covariant derivative}
$$\widetilde{D}_{\dot{c}(t)}W(t)=(\tfrac{{\rm d}z^i(t)}{{\rm d}t}+
\dot{c}^j(t)
N^i_j(c(t),W(t)))\partial_{x^i}|_{c(t)}$$
for any nowhere vanishing $W(t)=z^i(t)\partial_{x^i}|_{c(t)}$ along $c(t)$, the {\it nonlinearly parallel translation} $\mathbf{P}^{\mathrm{nl}}_{p,q;c}:T_pM\backslash\{0\}\rightarrow T_qM\backslash\{0\}$ from $p=c(a)$ to $q=c(b)$ along a smooth curve $c(t)$ can be similarly defined.
It can be alternatively described by the integral curves for the horizonal lifting of
$\dot{c}(t)$, i.e.,
$$\widetilde{\dot{c}(t)}^\mathcal{H}=
\dot{c}^i(t)\delta_{x^i}|_{T_{c(t)}M\backslash\{0\}}=
\dot{c}^i(t)(\partial_{x^i}-N^i_j\partial_{y^j})|_{T_{c(t)}M
\backslash\{0\}}.$$
To be precise, $\widetilde{\dot{c}(t)}^\mathcal{H}$
is viewed as a smooth tangent vector field on the submanifold $S=\cup_{t}(T_{c(t)}M\backslash\{0\})$.  For any nowhere vanishing smooth vector field $Y(t)$ along $c(t)$, it is {\it nonlinearly parallel}, i.e.,
$Y(t)=\mathbf{P}^{\mathrm{nl}}_{c(t_0),c(t);c}(Y(t_0))$ for all values of $t$, iff $(c(t),Y(t))$ is an integral curve of $\widetilde{\dot{c}(t)}^\mathcal{H}$ in $S$.

\subsection{Invariant frames on a Lie group}
\label{subsection-2-2}
In this section, we summarize some notations in \cite{Xu2021} for invariant frames.

Let $G$ be a connected Lie group, $L_g(g')=gg'$ and $R_g(g')=g'g$ the left and right translations respectively. Denote $\mathfrak{g}=\mathrm{Lie}(G)=T_eG$ and $[\cdot,\cdot]_{\mathfrak{g}}$ its Lie bracket.
We fix a basis $\{e_1,\cdots,e_n\}$ of $\mathfrak{g}$ with $[e_i,e_j]_{\mathfrak{g}}=c^k_{ij}e_k$.

For each $1\leq i\leq n$, we have a left invariant tangent vector field
$U_i(g)=(L_g)_*(e_i)$, so $\{U_i,\forall i\}$ is a left invariant frame on $G$ satisfying $[U_i,U_j]=c^k_{ij}U_k$. The complete lifting of $U_i$ is denoted
as $\widetilde{U}_i$, which is a smooth tangent vector field on $TG$
(see Section 2.1 in \cite{Xu2021}).
Any tangent vector $y\in T_gG$ can be
uniquely presented as $y=u^i U_i(g)$, which determines the smooth functions $u^i$'s on $T_gG$. Denote $\partial_{u^i}$'s the smooth vector fields on $TG$ which are tangent to and correspond to the $u^i$-coordinates in each $T_gG$. It is easy to observe the left invariancy of $U_i$'s and $u^i$'s, so
we call $\{\widetilde{U}_i,\partial_{u^i},\forall i\}$ a {\it left invariant frame on} $TG$.

The transformation between $\{\widetilde{U}_i,\partial_{u^i},\forall i\}$ and $\{\partial_{x^i},\partial_{y^i},\forall i\}$ for a standard local coordinate $(x^i,y^i)$ is the following (see (2.2) and Lemma 2.1 in \cite{Xu2021}),
\begin{eqnarray}
& &U_i=A^j_i\partial_{x^i},\quad u^i=y^jB^i_j\quad\mbox{and}\quad
\partial_{u^i}=A^j_i\partial_{y^i},\label{001}\\
& &\widetilde{U}_i=A^j_i\partial_{x^i}+y^j\tfrac{\partial}{\partial x^j}A^k_i\partial_{y^i},\label{002}
\end{eqnarray}
where $(A^j_i)=(A^j_i(x))$ and $(B^j_i)=(B^j_i(x))=(A^j_i(x))^{-1}$ (i.e., $A^j_iB^k_j=B^j_iA^k_j=\delta^k_i$) are matrix valued
functions which only depend on the $x$-entry. Notice that
$\partial_{x^i}$'s in (\ref{001}) and (\ref{002}) are local tangent vector fields on $G$, and their complete liftings to $TG$ respectively.
Comparing the coefficients of $\partial_{x^k}$ in both sides of
$$c^p_{qi}A^k_p\partial_{x^k}=c^p_{qi}U_p=[U_q,U_i]=
(A^j_q\tfrac{\partial}{\partial x^j}A^k_i-A^j_i\tfrac{\partial}{\partial x^j}A^k_q)\partial_{x^k},$$
we get (see (3.14) in \cite{Xu2021})
\begin{lemma}\label{lemma-0}
$A^j_q\tfrac{\partial}{\partial x^j}A^k_i-A^j_i\tfrac{\partial}{\partial x^j}A^k_q=c^p_{qi}A^k_p$.
\end{lemma}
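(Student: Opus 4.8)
The plan is to derive this coordinate identity directly from the bracket relation $[U_q,U_i]=c^p_{qi}U_p$, which is already recorded above as the defining property of the left invariant frame $\{U_i\}$. The only ingredients are that relation, the coordinate expression $U_i=A^j_i\partial_{x^j}$ from (\ref{001}), and the standard formula for the Lie bracket of two vector fields in a chart. Since each $A^j_i=A^j_i(x)$ depends only on the base point, no $y$-derivatives enter and the whole computation stays on $G$.

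First I would recall why $[U_q,U_i]=c^p_{qi}U_p$ holds: the $U_i$ are the left invariant extensions of the basis vectors $e_i$, so at the identity $[U_q,U_i]_e=[e_q,e_i]_{\mathfrak{g}}=c^p_{qi}e_p$; since the bracket of left invariant fields is again left invariant, the two sides agree at every point of $G$. This step can be cited rather than reproved, as it is the content of the relation $[U_i,U_j]=c^k_{ij}U_k$ stated when the frame was introduced.

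Next I would expand both sides in the chart. Writing $X=X^j\partial_{x^j}$ and $Y=Y^k\partial_{x^k}$, the coordinate formula $[X,Y]=(X^j\partial_{x^j}Y^k-Y^j\partial_{x^j}X^k)\partial_{x^k}$ applied to $X=U_q=A^j_q\partial_{x^j}$ and $Y=U_i=A^k_i\partial_{x^k}$ gives
\begin{equation*}
[U_q,U_i]=\bigl(A^j_q\tfrac{\partial}{\partial x^j}A^k_i-A^j_i\tfrac{\partial}{\partial x^j}A^k_q\bigr)\partial_{x^k},
\end{equation*}
while the right-hand side reads $c^p_{qi}U_p=c^p_{qi}A^k_p\partial_{x^k}$. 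Equating the coefficients of $\partial_{x^k}$ in these two expressions yields exactly the asserted identity.

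There is no serious obstacle here; the statement is simply a transcription of the frame's bracket relation into the chart. The only point requiring minor care is index bookkeeping—ensuring the summation indices in the bracket formula match those in $U_i=A^j_i\partial_{x^j}$ and that the antisymmetric combination $A^j_q\tfrac{\partial}{\partial x^j}A^k_i-A^j_i\tfrac{\partial}{\partial x^j}A^k_q$ is attached to the correct ordered pair $(q,i)$ rather than its reverse. Because the frame is globally defined, the resulting identity holds on the full domain of the chart.
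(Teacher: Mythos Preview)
Your proof is correct and is essentially identical to the paper's own argument: the paper also expands $[U_q,U_i]$ in the chart via $U_i=A^j_i\partial_{x^j}$, equates it to $c^p_{qi}U_p=c^p_{qi}A^k_p\partial_{x^k}$, and reads off the coefficients of $\partial_{x^k}$.
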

Using the right invariant tangent vector fields $V_i(g)=(R_g)_*(e_i)$ on $G$ and the presentation
$y=v^i V_i(g)\in T_gG$, we can similarly get a {\it right invariant
frame} $\{\widetilde{V}_i,\partial_{v^i},\forall i\}$ {\it on} $TG$.

To describe the transformation between $\{\widetilde{U}_i,\partial_{u^i},\forall i\}$ and $\{\widetilde{V}_i,\partial_{v^i},\forall i\}$,
we denote $\phi_i^j$ and $\psi_i^j$ the functions on $G$ such that $\mathrm{Ad}(g)e_i=\phi_i^je_j$ and $\mathrm{Ad}(g^{-1})e_i=\psi_i^je_j$ (so we have
$\psi_i^j\phi_j^k=\phi_i^j\psi_j^k=\delta_i^k$). Then at each $g\in G$,
$$U_i(g)=(L_g)_*(e_i)=(R_g)_*(R_{g^{-1}})_*(L_g)_*(e_i)
=(R_g)_*(\mathrm{Ad}(g)e_i)
=(R_g)_*(\phi^j_i(g) e_j)=\phi^j_i(g)V_j(g),$$
so we have
\begin{equation}\label{003}
U_i=\phi_i^jV_j,\quad u^i=\psi_j^iv^j,\quad
\partial_{u^i}=\phi_i^j\partial_{v^j}.
\end{equation}
In \cite{Xu2021}, we have proved
\begin{lemma} \label{lemma-2}
(1) $\phi^j_l V_j\phi_i^k=c^j_{li}\phi^k_j$,
\quad (2) $\widetilde{U}_i=\phi^j_i\widetilde{V}_j+c^q_{pi}u^p\partial_{u^q}$.
\end{lemma}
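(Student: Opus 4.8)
The plan is to extract both identities from a single computation: the derivative of the matrix coefficients $\phi_i^k$ of $\mathrm{Ad}$ along a left invariant direction. Since the flow of the left invariant field $U_p$ through $g$ is $t\mapsto g\exp(te_p)$, and $\mathrm{Ad}$ is a homomorphism, I would differentiate
\[
\phi_i^k(g\exp(te_p))e_k=\mathrm{Ad}(g)\mathrm{Ad}(\exp(te_p))e_i
\]
at $t=0$. Using $\tfrac{{\rm d}}{{\rm d}t}|_{t=0}\mathrm{Ad}(\exp(te_p))=\mathrm{ad}(e_p)$ together with $[e_p,e_i]_\mathfrak{g}=c^j_{pi}e_j$, this gives $(U_p\phi_i^k)e_k=\mathrm{Ad}(g)(c^j_{pi}e_j)=c^j_{pi}\phi_j^k e_k$, hence the key identity
\[
U_p\phi_i^k=c^j_{pi}\phi_j^k .
\]
Everything else is bookkeeping built on top of this.

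For (1), I would note that the relation $U_l=\phi_l^jV_j$ in (\ref{003}) is an identity of first order operators, so $\phi_l^jV_j\phi_i^k=U_l\phi_i^k$; substituting the key identity with $p=l$ yields $\phi_l^jV_j\phi_i^k=c^j_{li}\phi_j^k$ at once. One can also read (1) as the infinitesimal form of the statement that each $\mathrm{Ad}(g)$ is a Lie algebra automorphism, but reducing $\phi_l^jV_j$ to $U_l$ avoids reproving that.

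For (2), I would use the algebra of lifts to $TG$. Writing $X^{\mathrm{v}}$ for the vertical lift, the frame identifications give $U_i^{\mathrm{v}}=\partial_{u^i}$ and $V_j^{\mathrm{v}}=\partial_{v^j}$, and the standard Leibniz rule for the complete lift of a product reads $\widetilde{fX}=(f\circ\pi)\widetilde{X}+\dot f\,X^{\mathrm{v}}$, where $\dot f$ is the fibrewise linear function $\dot f(g,y)=y(f)$. Applying this to $U_i=\phi_i^jV_j$ and using $\mathbb{R}$-linearity of the complete lift gives
\[
\widetilde{U}_i=\phi_i^j\widetilde{V}_j+\dot{\phi}_i^j\,\partial_{v^j}.
\]
It then remains to identify the vertical term. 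Evaluating $\dot{\phi}_i^j$ on $y=u^pU_p(g)$ and invoking the key identity gives $\dot{\phi}_i^j=u^p\,U_p\phi_i^j=c^q_{pi}u^p\phi_q^j$; finally $\phi_q^j\partial_{v^j}=\partial_{u^q}$ from (\ref{003}) turns this into $c^q_{pi}u^p\partial_{u^q}$, exactly the claimed correction.

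I expect the only real subtlety to lie in (2), namely handling the lift calculus correctly: the complete lift is $\mathbb{R}$-linear but not $C^\infty(G)$-linear, and the failure is measured precisely by the extra vertical term $\dot f\,X^{\mathrm{v}}$. Getting the identification of $X^{\mathrm{v}}$ with the fibre coordinate field and the formula for $\dot f$ right is where care is needed; alternatively one could bypass the lift formalism and verify (2) in a standard local coordinate $(x^i,y^i)$ using (\ref{001}), (\ref{002}), (\ref{003}) and Lemma \ref{lemma-0}, but that route is computationally heavier and less transparent.
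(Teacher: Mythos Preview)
Your proof is correct, and its overall architecture matches the paper's: differentiate the $\mathrm{Ad}$-coefficients $\phi_i^k$ along a one-parameter subgroup to get (1), then plug into the Leibniz rule for complete lifts (what the paper cites as ``Lemma 2.2 in \cite{Xu2021}'', your $\widetilde{fX}=(f\circ\pi)\widetilde{X}+\dot f\,X^{\mathrm{v}}$) to get (2). The one genuine twist is the direction of differentiation. The paper differentiates along the \emph{right} invariant $V_j$ (flow $t\mapsto\exp(te_j)\cdot g$), obtaining $V_j\phi_i^k\,e_k=[e_j,\mathrm{Ad}(g)e_i]_\mathfrak{g}$, and then contracts with $\phi_l^j$ and uses that $\mathrm{Ad}(g)$ is a Lie algebra automorphism to reach (1); for (2) it writes the vertical term as $v^jV_j\phi_i^k\,\partial_{v^k}$ and feeds (1) in. You instead differentiate along the \emph{left} invariant $U_p$ (flow $t\mapsto g\exp(te_p)$), which immediately yields $U_p\phi_i^k=c^j_{pi}\phi_j^k$; (1) then drops out from $U_l=\phi_l^jV_j$ without a separate appeal to the automorphism property, and for (2) you evaluate $\dot\phi_i^j=u^pU_p\phi_i^j$ directly. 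Your route is marginally cleaner for (1), while the paper's keeps a more explicit record of $V_j\phi_i^k$, which is what actually appears in the raw Leibniz expansion; the two are mirror images of the same computation.
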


We briefly recall its proof here. To prove (1), we observe
\begin{eqnarray*}
V_j\phi_i^ke_k&=&V_j(\phi_i^ke_k)=\tfrac{{\rm d}}{{\rm d}t}
(\mathrm{Ad}(\exp te_j\cdot g)e_i)\\
&=&\tfrac{{\rm d}}{{\rm d}t}\mathrm{Ad}(\exp te_j)(\mathrm{Ad}(g)e_i)
=[e_j,\mathrm{Ad}(g)e_i]_\mathfrak{g},
\end{eqnarray*}
and
\begin{equation}\label{013}
\phi_l^j V_j\phi_i^k e_k=
[\mathrm{Ad}(g)e_l,\mathrm{Ad}(g)e_i]_\mathfrak{g}
=\mathrm{Ad}(g)[e_l,e_i]_\mathfrak{g}=
c^j_{li}\mathrm{Ad}(g)e_j=c^j_{li}
\phi^k_j e_k.
\end{equation}
Then (1) follows after a comparison for the coefficients of $e_k$ in (\ref{013}). To prove (2), we apply Lemma 2.2 in \cite{Xu2021} to the first equality in (\ref{003}) and get
$$\widetilde{U}_i=\phi_i^j\widetilde{V}_j+v^jV_j\phi_i^k\partial_{v^k}
=\phi_i^j\widetilde{V}_j+u^p\phi_p^jV_j\phi_i^k\partial_{v^k}.$$
Then (2) follows after (1) and the third equality in (\ref{003}) immediately.
\subsection{Left invariant spray structure on a Lie group}

In this subsection, we introduce the notions of invariant spray structure in \cite{Xu2021}.

Let $G$ be a connected Lie group and $\mathbf{G}$ a spray structure
on $G$. We call $\mathbf{G}$ {\it left invariant} (or {\it right invariant}) if all left (or right respectively) translations
preserve $\mathbf{G}$. We call $\mathbf{G}$ {\it bi-invariant} if it is both left and right invariant.
Using the invariant frames $\{\widetilde{U}_i,\partial_{u^i},\forall i\}$ and $\{\widetilde{V}_i,\partial_{v^i},\forall i\}$ on $TG$, the left and right invariancies of $\mathbf{G}$ can be equivalently described as
$$[\widetilde{V}_i,\mathbf{G}]=0,\forall i\quad\mbox{and}
\quad[\widetilde{U}_i,\mathbf{G}]=0,\forall i\quad\mbox{respectively}.$$

The canonical bi-invariant spray structure $\mathbf{G}_0=u^i\widetilde{U}_i=v^i\widetilde{V}_i$ (see Theorem A in \cite{Xu2021})
serves as the origin
in the space of left invariant spray structures on $G$. Any left invariant spray
structure $\mathbf{G}$ on $G$ can be presented as
$\mathbf{G}=\mathbf{G}_0-\mathbf{H}=\mathbf{G}_0-
\mathbf{H}^i\partial_{u^i}$. Here $\mathbf{H}=\mathbf{H}^i\partial_{u^i}$ is a left invariant smooth vector field on $TG\backslash0$, and  $\mathbf{H}^i$'s are left invariant smooth functions on $TG\backslash0$ which are positive 2-homogeneous in each $T_gG$.
We denote $\eta=\mathbf{H}|_{T_eG\backslash\{0\}}$ and
call it the {\it spray vector field} associated with $\mathbf{G}$. Usually $\eta$ is presented as a smooth map from $\mathfrak{g}\backslash\{0\}$ to $\mathfrak{g}$, i.e., $\eta(y)=\mathbf{H}^i(e,y)e_i$. The {\it connection operator}  $N(\cdot,\cdot):(\mathfrak{g}\backslash\{0\})\times\mathfrak{g}
\rightarrow\mathfrak{g}$ is then defined by $N(y,w)=\tfrac12D\eta(y,w)-\tfrac12[y,w]_\mathfrak{g}$,
in which $D\eta(y,w)$ is the derivative of the spray vector field $\eta:\mathfrak{g}\backslash\{0\}\rightarrow\mathfrak{g}$
at $y$ in the direction of $w$.

If we present the left invariant spray structure $\mathbf{G}=\mathbf{G}_0-\mathbf{H}^i\partial_{u^i}$
as $\mathbf{G}=y^i\partial_{x^i}-2\mathbf{G}^i\partial_{y^i}$ for any standard local coordinate $(x^i,y^i)$. Using (\ref{001}) and (\ref{002}), it is easy to check (see (3.10) in \cite{Xu2021})
\begin{lemma} \label{lemma-1}
 $N^k_j=\tfrac{\partial\mathbf{G}^k}{\partial y^j}
=\tfrac12A^k_i\tfrac{\partial}{\partial y^j}\mathbf{H}^i
-\tfrac12u^i\tfrac{\partial}{\partial x^j}A^k_i-
\tfrac12y^l B^i_j\tfrac{\partial}{\partial x^l}A^k_i$.
\end{lemma}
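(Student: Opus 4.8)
The plan is to write both presentations of $\mathbf{G}$ in one standard local coordinate $(x^i,y^i)$ and match the coefficients of $\partial_{y^k}$. First I would substitute the transition formulae (\ref{001}) and (\ref{002}) into $\mathbf{G}_0=u^i\widetilde{U}_i$, obtaining
$$\mathbf{G}_0=u^iA^j_i\partial_{x^j}+u^iy^l\tfrac{\partial}{\partial x^l}A^k_i\,\partial_{y^k}.$$
Here the inverse relation $A^j_iB^k_j=\delta^k_i$ gives $u^iA^j_i=y^mB^i_mA^j_i=y^j$, so the first term collapses to exactly $y^j\partial_{x^j}$. The correction term is $\mathbf{H}^i\partial_{u^i}=\mathbf{H}^iA^k_i\partial_{y^k}$ by the third relation in (\ref{001}). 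Subtracting and comparing with $\mathbf{G}=y^i\partial_{x^i}-2\mathbf{G}^i\partial_{y^i}$, the $\partial_x$-coefficients agree automatically, while the $\partial_{y^k}$-coefficients yield
$$\mathbf{G}^k=\tfrac12\mathbf{H}^iA^k_i-\tfrac12u^iy^l\tfrac{\partial}{\partial x^l}A^k_i.$$

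The second step is to differentiate this in $y^j$. The key point to keep track of is that $A^k_i$ and $B^i_j$ are functions of the $x$-entry alone, whereas $u^i=y^mB^i_m$ is linear in $y$. Thus the first term contributes $\tfrac12A^k_i\tfrac{\partial}{\partial y^j}\mathbf{H}^i$, and for the second term the product rule $\tfrac{\partial}{\partial y^j}(u^iy^l)=B^i_jy^l+u^i\delta^l_j$ produces the two pieces
$$\tfrac{\partial}{\partial y^j}\Big(u^iy^l\tfrac{\partial}{\partial x^l}A^k_i\Big)=y^lB^i_j\tfrac{\partial}{\partial x^l}A^k_i+u^i\tfrac{\partial}{\partial x^j}A^k_i.$$
Assembling these contributions produces precisely the stated expression for $N^k_j=\partial\mathbf{G}^k/\partial y^j$.

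I expect no genuine obstacle here beyond careful index bookkeeping; the single place where one can go wrong is forgetting that $u^i$ depends on $y$, which would drop the middle term $-\tfrac12u^i\tfrac{\partial}{\partial x^j}A^k_i$. Everything else is purely formal and rests only on (\ref{001}), (\ref{002}), and the inverse relation between the matrices $(A^j_i)$ and $(B^j_i)$.
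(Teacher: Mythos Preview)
Your argument is correct and is exactly the computation the paper has in mind: the paper does not spell out a proof but simply says the formula follows from (\ref{001}) and (\ref{002}) (referring to (3.10) in \cite{Xu2021}), and you have carried out precisely that substitution and differentiation. The only cosmetic difference is the order of the last two terms, which is immaterial.
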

\section{Linear parallel translation along a geodesic}
\subsection{Proof of Theorem \ref{main-thm-1}}
Suppose the connected Lie group $G$ is  endowed with a left invariant spray structure $\mathbf{G}=\mathbf{G}_0-\mathbf{H}=
u^i\widetilde{U}_i-\mathbf{H}^i\partial_{u^i}$ with the spray vector field $\eta:\mathfrak{g}\backslash\{0\}\rightarrow\mathfrak{g}$. Now we consider
the linear parallel translation along a smooth curve $c(t)$ on $(G,\mathbf{G})$ with nowhere-vanishing $\dot{c}(t)$.
We denote $y(t)=(L_{c(t)^{-1}})_*(\dot{c}(t))$, which is a smooth curve on $\mathfrak{g}\backslash\{0\}$.
%The following theorem (see Theorem D in \cite{Xu2021}) indicates
%$ y(t)=(L_{c(t)^{-1}})_*(\dot{c}(t))=u^i(t)e_i$ is an integral curve of $-\eta$
%in $\mathfrak{g}\backslash\{0\}$.
%
%\begin{theorem} \label{thm-2}
%Let $\mathbf{G}$ be a left invariant spray structure on the Lie group $G$ with the associated spray vector field $\eta$. Then for any open interval $(a,b)\subset\mathbb{R}$ containing $0$, there is a one-to-one correspondence between the following two sets:
%\begin{enumerate}
%\item the set of all curves $c(t)$ on $G$, with $t\in(a,b)$ and $c(0)=e$, which are geodesics for $\mathbf{G}$;
%\item the set of all $y(t)$ on $\mathfrak{g}\backslash\{0\}$, with $t\in(a,b)$, which are integral curves of $-\eta$.
%\end{enumerate}
%The correspondence between these two sets is given by
%$y(t)=(L_{c(t)^{-1}})_*(\dot{c}(t))$.
%\end{theorem}
Using left translations, smooth vector fields along $c(t)$ and those along $y(t)$
can be one-to-one corresponded. Consider any smooth vector field $W(t)=w^i(t)U_i(c(t))$ along $c(t)$, then the corresponding
$w(t)=(L_{c(t)^{-1}})_*(W(t))$ along $y(t)$ can be presented as
$w(t)=w^i(t)e_i$.

\begin{lemma} \label{lemma-5}
$D_{\dot{c}(t)}W(t)=(\tfrac{{\rm d}w^l(t)}{{\rm d}t}+
\tfrac12w^j(t)\tfrac{\partial}{\partial u^j}\mathbf{H}^l+
\tfrac12w^j(t)u^k(t) c_{kj}^l )U_l(c(t))$.
\end{lemma}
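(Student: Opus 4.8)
The plan is to carry out a direct computation in a standard local coordinate $(x^i,y^i)$ and then translate the result into the left invariant frame $\{U_l\}$. First I would write the given vector field in coordinates as $W(t)=z^i(t)\partial_{x^i}|_{c(t)}$; since $W(t)=w^i(t)U_i(c(t))$ and $U_i=A^j_i\partial_{x^j}$ by (\ref{001}), this gives $z^i(t)=w^k(t)A^i_k(c(t))$. Applying the definition (\ref{004}) of the linearly covariant derivative and contracting the resulting coefficient of $\partial_{x^i}$ with $B^l_i$ (so that $\partial_{x^i}=B^l_iU_l$) reduces the problem to simplifying $Q^l:=B^l_i(\tfrac{{\rm d}z^i}{{\rm d}t}+z^jN^i_j)$, which is the coefficient of $U_l(c(t))$ in $D_{\dot{c}(t)}W(t)$.

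For the first piece, differentiating $z^i=w^kA^i_k$ along $c(t)$ and using $\tfrac{{\rm d}}{{\rm d}t}A^i_k(c(t))=y^m\tfrac{\partial}{\partial x^m}A^i_k$ (the lifted velocity has $y$-coordinates $\dot{c}^m=y^m$) yields $B^l_i\tfrac{{\rm d}z^i}{{\rm d}t}=\tfrac{{\rm d}w^l}{{\rm d}t}+w^ky^mB^l_i\tfrac{\partial}{\partial x^m}A^i_k$, where I use $B^l_iA^i_k=\delta^l_k$. For the second piece I would substitute the expression for $N^i_j$ from Lemma \ref{lemma-1}. The term carrying $\mathbf{H}$ collapses immediately: using $B^l_iA^i_q=\delta^l_q$ together with the relation $A^j_p\partial_{y^j}=\partial_{u^p}$ from (\ref{001}), it becomes $\tfrac12 w^j\tfrac{\partial}{\partial u^j}\mathbf{H}^l$, which is exactly the second term in the claimed formula.

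The two remaining terms of $N^i_j$ both involve $x$-derivatives of $A^i_q$, and combining them with the leftover piece $w^ky^mB^l_i\tfrac{\partial}{\partial x^m}A^i_k$ from the first step is the heart of the computation. After using $A^j_pB^q_j=\delta^q_p$ to simplify the last term of $N^i_j$, it and the leftover piece (both carrying the velocity $y$) combine, with the cancellation of the factors $1$ and $\tfrac12$, into $\tfrac12 w^ky^mB^l_i\tfrac{\partial}{\partial x^m}A^i_k$. Rewriting $y^m=A^m_su^s$ in fibre coordinates and adding the remaining term of $N^i_j$ (which already carries $u$), everything reduces to the antisymmetric combination $A^j_q\tfrac{\partial}{\partial x^j}A^i_k-A^j_k\tfrac{\partial}{\partial x^j}A^i_q$ contracted against $\tfrac12 w^ku^qB^l_i$. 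I would then apply Lemma \ref{lemma-0} to rewrite this combination as $c^p_{qk}A^i_p$; contracting with $B^l_i$ produces $\delta^l_p$, leaving the structure-constant term $\tfrac12 w^ju^kc^l_{kj}$ after relabelling. The main obstacle is precisely this bookkeeping: one must track the index relabelling and the substitution $y=Au$ carefully so that the $x$-derivative terms reorganize into the form handled by Lemma \ref{lemma-0}. Once that is recognized, the three terms of the asserted formula fall out.
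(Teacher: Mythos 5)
Your proposal is correct and follows essentially the same route as the paper's proof: express $W(t)$ in a standard local coordinate via $z^i=w^kA^i_k$, substitute $N^i_j$ from Lemma \ref{lemma-1} into the definition (\ref{004}), collapse the $\mathbf{H}$-term using $\partial_{u^k}=A^j_k\partial_{y^j}$, and combine the two $x$-derivative terms (after the $1-\tfrac12=\tfrac12$ cancellation and the substitution $y^m=A^m_su^s$) into the antisymmetric expression handled by Lemma \ref{lemma-0}. The only cosmetic difference is that you contract with $B^l_i$ at the outset to work with $U_l$-coefficients, whereas the paper carries $\partial_{x^i}$ along and converts to the $U_l$-frame at each stage; the computation is the same.
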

\begin{proof}
In any standard local coordinate $(x^i,y^i)$, we have the presentations
$c(t)=(c^i(t))$, $y^i(t)=y^i(c(t),\dot{c}(t))=\dot{c}^i(t)$ and $W(t)=w^i(t)U_i(c(t))=z^i(t)\partial_{x^i}|_{c(t)}$. Using the notations in (\ref{001}), i.e., $U_i=A^j_i\partial_{x^i}$, $u^i=y^jB^i_j$ and $(B^i_j(x))=(A^i_j(x))^{-1}$, we also have
$u^i(t)=y^j(t)B^i_j(c(t))$ and
$z^i(t)=w^j(t)A^i_j(c(t))$. So at each point $x=c(t)$, the covariant derivative $D_{\dot{c}(t)}W(t)$ (see (\ref{004})) can be calculated as following,
\begin{eqnarray}
D_{\dot{c}(t)}W(t)
&=&(\tfrac{{\rm d}z^i(t)}{{\rm d}t}+z^j(t)N^i_j(c(t),\dot{c}(t)))
\partial_{x^i}\nonumber\\
&=&\tfrac{{\rm d}}{{\rm d}t}(w^j(t)A_j^i)
B_i^k U_k+z^j(t)(\tfrac12A^i_l
\tfrac{\partial}{\partial y^j}\mathbf{H}^l-
\tfrac12 u^l(t)
\tfrac{\partial}{\partial x^j}A^i_l-\tfrac12 y^p(t)B^l_j \tfrac{\partial}{\partial x^p}A^i_l)\partial_{x^i}\nonumber\\
&=&(\tfrac{{\rm d}w^k(t)}{{\rm d}t}U_k+
z^p(t)y^l(t) B_p^j \tfrac{\partial}{\partial x^l}A^i_j \partial_{x^i})\nonumber\\
& &
+\tfrac12w^j(t)\tfrac{\partial}{\partial u^j}\mathbf{H}^l U_l
-(\tfrac12 z^j(t)y^p(t) B^l_p\tfrac{\partial}{\partial x^j}A^i_l+\tfrac12z^j(t) y^p(t) B^l_j
\tfrac{\partial}{\partial x^p}A^i_l
)\partial_{x^i}\nonumber\\
%&=&(\tfrac{{\rm d}w^k(t)}{{\rm d}t}+\tfrac12w^j\tfrac{\partial}{\partial u^j}\mathbf{H}^k)U_k+\tfrac12(z^py^l B^j_p B^k_i\tfrac{\partial}{\partial x^l}A^i_j-z^jy^p B^l_pB^k_i\tfrac{\partial}{\partial x^j}A^i_l))U_k
%\nonumber\\
&=&(\tfrac{{\rm d}w^k(t)}{{\rm d}t}+\tfrac12w^j(t)\tfrac{\partial}{\partial u^j}\mathbf{H}^k)U_k+\tfrac12(
z^p(t)y^l(t)B^j_p\tfrac{\partial}{\partial x^l}A^i_j-
z^j(t)y^p(t)B^l_p\tfrac{\partial}{\partial x^l}A^i_k)
\partial_{x^i}\nonumber\\
&=&(\tfrac{{\rm d}w^k(t)}{{\rm d}t}+\tfrac12w^j(t)\tfrac{\partial}{\partial u^j}\mathbf{H}^k)U_k+\tfrac12(w^j(t)u^k(t) A^l_k \tfrac{\partial}{\partial x^l}A^i_j-w^j(t)u^k(t) A_j^l \tfrac{\partial}{\partial x^l}A^i_k)\partial_{x^i}\nonumber\\
&=&(\tfrac{{\rm d}w^l(t)}{{\rm d}t}+\tfrac12w^j(t)\tfrac{\partial}{\partial u^j}\mathbf{H}^l+\tfrac12w^j(t)u^k(t) c_{kj}^l )U_l(c(t)),\label{005}
\end{eqnarray}
in which the second line uses Lemma \ref{lemma-1} and the last line uses Lemma \ref{lemma-0}.
\end{proof}\bigskip

Now we are ready to prove Theorem \ref{main-thm-1}, which  interprets Lemma \ref{lemma-5} by left translations.

\bigskip\begin{proof}[Proof of Theorem \ref{main-thm-1}]
Lemma \ref{lemma-5} indicates
\begin{eqnarray*}
(L_{c(t)^{-1}})_{*}(D_{\dot{c}(t)}W(t))
&=&(\tfrac{{\rm d}w^l(t)}{{\rm d}t}+\tfrac12w^j(t)\tfrac{\partial}{\partial u^j}\mathbf{H}^l+\tfrac12w^j(t)u^k(t) c_{kj}^l )e_l\\
&=&(\tfrac{{\rm d}w^l(t)}{{\rm d}t}+(\tfrac12w^j(t)\tfrac{\partial}{\partial u^j}\mathbf{H}^l-\tfrac12w^j(t)u^k(t)) c_{kj}^l+w^j(t) u^k(t) c_{kj}^l )e_l\\
&=&\tfrac{{\rm d}}{{\rm d}t}{w}(t)+N(y(t),w(t))+[y(t),w(t)]_\mathfrak{g},
\end{eqnarray*}
where $y(t)=u^i(t)e_i$ and $w(t)=w^i(t)e_i$.
So $D_{\dot{c}(t)}W(t)\equiv0$ if and only if $(L_{c(t)^{-1}})_{*}(D_{\dot{c}(t)}W(t))\equiv0$, i.e., $w(t)$ is a
solution of
$\tfrac{{\rm d}}{{\rm d}t}{w}(t)+N(y(t),w(t))+[y(t),w(t)]_\mathfrak{g}=0$.
\end{proof}
\subsection{Proof of Theorem \ref{thm-1}}

The Riemann curvature formula for a left invariant spray structure $\mathbf{G}$ is given by Theorem C in \cite{Xu2021}. In particular, when restricted to $T_eG$, the {\it Riemann curvature operator} $R_\cdot(\cdot):(\mathfrak{g}\backslash\{0\})\times\mathfrak{g}
\rightarrow\mathfrak{g}$ satisfies
(see Corollary 4.1 in \cite{Xu2021}),
\begin{equation}\label{006}
R_y(w)=DN(\eta,y,w)-N(y,N(y,w))+N(y,[y,w]_\mathfrak{g})-
[y,N(y,w)]_\mathfrak{g},
\end{equation}
 where $DN(\eta,y,w)=\tfrac{{\rm d}}{{\rm d}t}|_{t=0}N(y+t\eta,w)$, i.e., it is the derivative of $N(\cdot,w)$ at $y$ in the direction of $\eta(y)$. See Proposition 3.2 in \cite{Hu2015-2} for (\ref{006}) in homogeneous Finsler geometry, and Lemma 5.1 in \cite{HM2021} for another homogeneous Riemannian curvature formula when $\eta(y)=2P(y)y$ for some smooth positive 1-homogeneous function $P(\cdot)$ on $\mathfrak{g}\backslash\{0\}$.

Let $c(t)$ be a geodesic on $(G,\mathbf{G})$ and denote $y(t)=(L_{c(t)^{-1}})_*(\dot{c}(t))$.
The following theorem indicates that $y(t)$
is an integral curve of $-\eta$ (see Theorem D in \cite{Xu2021}).
\begin{theorem} \label{thm-2}
Let $\mathbf{G}$ be a left invariant spray structure on the Lie group $G$ with the associated spray vector field $\eta$. Then for any open interval $(a,b)\subset\mathbb{R}$ containing $0$, there is a one-to-one correspondence between the following two sets:
\begin{enumerate}
\item the set of all curves $c(t)$ on $G$, with $t\in(a,b)$ and $c(0)=e$, which are geodesics for $\mathbf{G}$;
\item the set of all $y(t)$ on $\mathfrak{g}\backslash\{0\}$, with $t\in(a,b)$, which are integral curves of $-\eta$.
\end{enumerate}
The correspondence between these two sets is given by
$y(t)=(L_{c(t)^{-1}})_*(\dot{c}(t))$.
\end{theorem}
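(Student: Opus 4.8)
The plan is to reduce the geodesic condition to an ordinary differential equation for the left-trivialized velocity, working in an arbitrary standard local coordinate $(x^i,y^i)$ and then reinterpreting the outcome invariantly. First I would write $y(t)=(L_{c(t)^{-1}})_*(\dot c(t))=u^i(t)e_i$, so that by (\ref{001}) the fibre coordinates are $u^i(t)=\dot c^j(t)B^i_j(c(t))$; recall that $c(t)$ is a geodesic precisely when its velocity lift is an integral curve of $\mathbf G=y^i\partial_{x^i}-2\mathbf G^i\partial_{y^i}$, i.e. when $\ddot c^j(t)+2\mathbf G^j(c(t),\dot c(t))=0$ for all $j$. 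The goal is to show that this holds, for \emph{any} curve with nowhere-vanishing velocity, if and only if $\tfrac{\rm d}{{\rm d}t}y(t)+\eta(y(t))=0$; the one-to-one correspondence will then fall out formally.

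The computational heart is to expand $\mathbf G=\mathbf G_0-\mathbf H=u^i\widetilde U_i-\mathbf H^i\partial_{u^i}$ in the coordinate basis using (\ref{001}) and (\ref{002}). Since $u^iA^j_i=y^j$, the horizontal part is the tautological $y^j\partial_{x^j}$, and comparing vertical parts yields $2\mathbf G^k=\mathbf H^lA^k_l-u^iy^j\tfrac{\partial}{\partial x^j}A^k_i$ (equivalently, this follows from Lemma \ref{lemma-1} together with the $2$-homogeneity of $\mathbf G^k$). I would then differentiate $u^i(t)=\dot c^jB^i_j$, and handle the $\tfrac{\rm d}{{\rm d}t}B^i_j$ term by $A^j_p\tfrac{\partial}{\partial x^k}B^i_j=-\tfrac{\partial}{\partial x^k}A^j_p\,B^i_j$, which comes from differentiating $A^j_pB^i_j=\delta^i_p$. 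Substituting the expression for $2\mathbf G^j$ and using $A^j_lB^i_j=\delta^i_l$, the two derivative-of-$A$ terms become literally equal after relabelling dummy indices and cancel, leaving the pointwise identity $(\ddot c^j+2\mathbf G^j)B^i_j=\tfrac{\rm d}{{\rm d}t}u^i+\mathbf H^i(c,\dot c)$. Because $(B^i_j)$ is invertible, and because $\mathbf H^i$ is left invariant so that $\mathbf H^i(c(t),\dot c(t))=\mathbf H^i(e,y(t))=(\eta(y(t)))^i$, this identity says exactly that $c$ is a geodesic iff $\tfrac{\rm d}{{\rm d}t}y(t)=-\eta(y(t))$, i.e. iff $y(t)$ is an integral curve of $-\eta$.

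With this equivalence the correspondence assembles immediately. The assignment $c\mapsto y=(L_{c^{-1}})_*(\dot c)$ sends the set in (1) into the set in (2). Injectivity follows from uniqueness of solutions of $\dot c(t)=(L_{c(t)})_*(y(t))$ with $c(0)=e$, since any geodesic in (1) is the solution of this equation for its own $y$. For surjectivity, given an integral curve $y(t)$ of $-\eta$ on $(a,b)$ I would solve the time-dependent left invariant equation $\dot c(t)=(L_{c(t)})_*(y(t))$, $c(0)=e$; its velocity $(L_{c(t)})_*(y(t))$ is nowhere zero because $y(t)\in\mathfrak g\setminus\{0\}$, and the equivalence of the previous paragraph shows that this $c(t)$ is a geodesic whose image under the map is $y(t)$.

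The main obstacle I anticipate is twofold. On the computational side it is the exact cancellation of the frame-derivative terms together with the correct reading of $\mathbf G^k$ off the frame expansion; the index bookkeeping is the only place error can enter, although the cancellation is forced once the product-rule identity for $B^i_j$ is used (and, notably, Lemma \ref{lemma-0} is not needed here). The more conceptual point requiring care is that surjectivity demands the reconstructing curve $c(t)$ to exist on the \emph{whole} interval $(a,b)$, not merely locally: this is not bare ODE existence but rests on the completeness of time-dependent families of left invariant vector fields on a Lie group (equivalently, integrability of the Maurer--Cartan datum $y(t)$ to a curve $c(t)$ with $c(0)=e$), which I would invoke explicitly.
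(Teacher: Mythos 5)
Your proposal is correct, but note first that this paper never proves Theorem \ref{thm-2} itself: it is quoted as Theorem D of \cite{Xu2021}, and the natural proof in that framework runs through the invariant frames rather than standard local coordinates. Your coordinate computation checks out: the identity $(\ddot c^j+2\mathbf G^j)B^i_j=\tfrac{{\rm d}}{{\rm d}t}u^i+\mathbf H^i(c(t),\dot c(t))$ does hold, the two derivative-of-$A$ terms cancel exactly as you claim, and combining it with the left invariance of the functions $\mathbf H^i$ gives the equivalence between the geodesic equation and $\tfrac{{\rm d}}{{\rm d}t}y(t)=-\eta(y(t))$; your reconstruction argument for surjectivity, including the explicit appeal to global solvability of $\dot c(t)=(L_{c(t)})_*(y(t))$, $c(0)=e$, on all of $(a,b)$ (completeness of time-dependent left invariant vector fields on a Lie group), is exactly what is needed and is a point easily glossed over. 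The difference from the paper's route is that the invariant-frame machinery collapses your computational heart to one line: along the lifted curve $(c(t),\dot c(t))$ one has $\tfrac{{\rm d}}{{\rm d}t}u^i(t)=\mathbf G u^i$, and $\mathbf G u^i=\mathbf G_0 u^i-\mathbf H^j\partial_{u^j}u^i=-\mathbf H^i$, because $\mathbf G_0=v^j\widetilde V_j$ annihilates the left invariant fibre coordinates ($\widetilde V_j u^i=0$, as used in the proof of Lemma \ref{lemma-4}), or equivalently because $\widetilde U_ju^i=c^i_{pj}u^p$ by (2) of Lemma \ref{lemma-2} and $c^i_{pj}u^pu^j=0$ by antisymmetry. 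Your cancellation of the frame-derivative terms is precisely the coordinate shadow of the identity $\mathbf G_0u^i=0$. What your approach buys is self-containedness: it needs only the transformation rules (\ref{001}) and (\ref{002}) and no facts about $\mathbf G_0$ or the right invariant frame. What the frame approach buys is brevity and an explanation of why the cancellation is forced rather than accidental.
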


Let $W(t)$ be a linearly parallel vector field along $c(t)$ and denote $w(t)=(L_{c(t)^{-1}})_*(\dot{c}(t))$, which is viewed as a smooth vector field along the curve $y(t)$ on $\mathfrak{g}\backslash\{0\}$. By Theorem \ref{main-thm-1},
we have $
\tfrac{{\rm d}}{{\rm d}t}w(t)+N(y(t),w(t))+[y(t),w(t)]_\mathfrak{g}=0$.
%In \cite{?}, the Riemann curvature for the left invariant spray structure $\mathbf{G}$ has been calculated. When restricted to
%$T_eG$, the Riemann curvature $R_\cdot(\cdot):(\mathfrak{g}\backslash\{0\})\times\mathfrak{g}
%\rightarrow\mathfrak{g}$ has the same formula as that of L. Huang \cite{?} in Finsler geometry, i.e.,
%\begin{equation}\label{006}
%R_y(w)=DN(\eta,y,w)-N(y,N(y,w))+N(y,[y,w]_\mathfrak{g})-
%[y,N(y,w)]_\mathfrak{g},
%\end{equation}
%which generalizes the one L. Huang found in homogeneous Finsler geometry \cite{?}. Here $DN(\eta,y,w)$ is the derivative of $N(\cdot,w)$ at $y$ in the direction of $\eta(y)$.
Now we prove Theorem \ref{thm-1}, i.e., an alternative interpretation of (\ref{006}).

\bigskip\begin{proof}[Proof of Theorem \ref{thm-1}]
By (\ref{007}) in Remark \ref{remark}, we have
%$[\eta,w(t)]=-\tfrac{{\rm d}}{{\rm d}t}w(t)-D\eta(y(t),w(t))$,
%in which $D\eta(y(t),w(t))=\tfrac{{\rm d}}{{\rm d}s}|_{s=0}\eta(y(t)+sw(t))$ is the derivative of $\eta$ at $y(t)$ in the direction of $w(t)$. can be calculated by
%Since $w(t)=(L_{c(t)^{-1}})_*(W(t))$ for a linearly parallel vector field $W(t)$ along the geodesic $c(t)$, $w(t)$ is a smooth vector field along the integral curve $y(t)=(L_{c(t)^{-1}})_*(\dot{c}(t))$ of $-\eta$. So $[w(t),\eta]=[-\eta,w(t)]$, as a smooth vector field along $y(t)$, is well defined by (\ref{007}).
%Using (\ref{007}) and the linear coordinate on $\mathfrak{g}$, we get
\begin{eqnarray*}
[-\eta,w(t)]=\tfrac{{\rm d}}{{\rm d}t}w(t)+D\eta(y(t),w(t))
=\tfrac{{\rm d}}{{\rm d}t}w(t)+2N(y(t),w(t))+[y(t),w(t)]_\mathfrak{g},
\end{eqnarray*}
in which $D\eta(y(t),w(t))=\tfrac{{\rm d}}{{\rm d}s}|_{s=0}\eta(y(t)+sw(t))=2N(y(t),w(t))+[y(t),w(t)]_\mathfrak{g}$.
By Theorem \ref{main-thm-1}, $
\tfrac{{\rm d}}{{\rm d}t}w(t)+N(y(t),w(t))+[y(t),w(t)]=0$,
so
$N(t)=N(y(t),w(t))=[-\eta,w(t)]=-[\eta,w(t)]$.

Using Theorem \ref{main-thm-1}, Theorem \ref{thm-2} and the linearity of $N(y,w)$ for the $w$-entry,
we have
\begin{eqnarray}\label{008}
\tfrac{{\rm d}}{{\rm d}t}N(y(t),w(t))&=&
DN(-\eta,y(t),w(t))+N(y(t),\tfrac{{\rm d}}{{\rm d}t}w(t))
\nonumber\\
&=&-DN(\eta,y(t),w(t))+N(y(t),-N(y(t),w(t))-[y(t),w(t)]_\mathfrak{g})
\nonumber\\
&=&-DN(\eta,y(t),w(t))-N(y(t),N(y(t),w(t)))\nonumber\\& &-
N(y(t),[y(t),w(t)]_\mathfrak{g}).
\end{eqnarray}
Here $DN(\eta,y(t),w(t))$ is the derivative of $N(\cdot,w(t))$ at $y(t)$
in the direction of $w(t)$, for each fixed value of $t$.
So
(\ref{007}) and (\ref{008}) imply
\begin{eqnarray*}
[\eta,N(t)]&=&-[-\eta,N(t)]=-\tfrac{{\rm d}}{{\rm d}t}N(y(t),w(t))-
D\eta(y(t),N(y(t),w(t)))\\
%&=&DN(\eta,y(t),w(t))-N(y(t),\tfrac{{\rm d}}{{\rm d}t}w(t))-
%D\eta(y(t),N(y(t),w(t)))\\
&=&DN(\eta,y(t),w(t))+N(y(t),N(y(t),w(t)))+N(y(t),[y(t),w(t)]_\mathfrak{g})\\
& &-(2N(y(t),N(y(t),w(t)))+[y(t),N(y(t),w(t))]_\mathfrak{g})\\
&=&DN(\eta,y(t),w(t))-N(y(t),N(y(t),w(t)))+N(y(t),[y(t),w(t)]_\mathfrak{g})
\\& &-[y(t),N(y(t),w(t))]_\mathfrak{g}\\
&=&R_{y(t)}(w(t))=R(t).
\end{eqnarray*}
This ends the proof of Theorem \ref{thm-1}.
\end{proof}
\subsection{Landsberg curvature and S-curvature for a left invariant Finsler metric}
\label{subsection-2-3}
Theorem \ref{main-thm-1} and Theorem \ref{thm-2} can be used to prove other curvature formulas for $(G,\mathbf{G})$. Here we take
the Landsberg and S-curvature for a left invariant Finsler metric for example.

The Landsberg curvature $L$ for a Finsler metric $F$ can be calculated
by
\begin{equation}\label{016}
L_{\dot{c}(t)}(W(t),W(t),W(t))=
\tfrac{{\rm d}}{{\rm d}t}C_{\dot{c}(t)}(W(t),W(t),W(t)),
\end{equation}
in which $c(t)$ is a geodesic, $W(t)$ is linearly parallel along $c(t)$, and $C_\cdot(\cdot,\cdot,\cdot)$ is the Cartan tensor (see (7.16) in \cite{Sh2001-2}). When $F$ is left invariant,
Theorem \ref{main-thm-1} and Theorem \ref{thm-2} translate (\ref{016}) to
\begin{eqnarray*}
L_{y(t)}(w(t),w(t),w(t))&=&
\tfrac{{\rm d}}{{\rm d}t}C_{y(t)}(w(t),w(t),w(t))\\
&=&(\tfrac{{\rm d}}{{\rm d}t}C_{y(t)})(w(t),w(t),w(t),\eta(y(t)))
+3C_{y(t)}(\tfrac{{\rm d}}{{\rm d}t}w(t),w(t),w(t))\\
&=&-C_{y(t)}(w(t),w(t),w(t),\eta(y(t)))\\& &-
3C_{y(t)}(N(y(t),w(t))+[y(t),w(t)],w(t),w(t)),
\end{eqnarray*}
where $y(t)=(L_{c(t)^{-1}})_*(\dot{c}(t))$ is an integral curve of $-\eta$ and $w(t)=(L_{c(t)^{-1}})_*(W(t))$ is a solution of (\ref{011}). It verifies the Landsberg curvature formula in Proposition
6.1 in \cite{Hu2015-1} for a left invariant Finsler metric, i.e.,
$$L_y(w,w,w)=3C_y(w,w,[w,y]-N(y,w))-C_y(w,w,w,\eta(y)).$$

The S-curvature for a Finsler metric $F$ and a smooth measure ${\rm d}\mu$ can be calculated by
\begin{equation}\label{017}
S(c(t),\dot{c}(t))=\tfrac{{\rm d}}{{\rm d}t}\ln\det(\langle W_i(t),W_j(t)\rangle_{\dot{c}(t)})-\tfrac{{\rm d}}{{\rm d}t}
\ln{|\omega(W_1(t),\cdots,W_n(t))|},
\end{equation}
in which $c(t)$ is a geodesic, $\{W_1(t),\cdots,W_n(t)\}$ is any frame along $c(t)$, and $\langle\cdot,\cdot\rangle_{\cdot}$ is the fundamental tensor of $F$ \cite{XMYZ2020}. When $F$ is left invariant, ${\rm d}\mu=\omega$ is a left invariant volume form, and $W_i(t)$'s are taken to be linearly parallel along $c(t)$ and orthonormal with respect to $\langle\cdot,\cdot\rangle_{\dot{c}(t)}$,
the first summand in the right of (\ref{017}) vanishes, and
Theorem \ref{main-thm-1} implies
\begin{eqnarray*}
S(e,y(t))&=&S(c(t),\dot{c}(t))=
-\tfrac{{\rm d}}{{\rm d}t}\ln|\omega(W_1(t),\cdots,W_n(t))|\\& =&
-\tfrac{{\rm d}}{{\rm d}t}\ln|\omega(w_1(t),\cdots,w_n(t))|\\
&=&-\tfrac{\omega(\tfrac{{\rm d}}{{\rm d}t}w_1(t),w_2(t),\cdots,w_n(t))+\cdots+
\omega(w_1(t),\cdots,w_{n-1}(t),\tfrac{{\rm d}}{{\rm d}t}w_n(t))
}{\omega(w_1(t),\cdots,w_n(t))}\\
&=&-\sum_{i=1}^n\langle\tfrac{{\rm d}}{{\rm d}t}w_i(t),w_i(t)\rangle_{y(t)}\\
&=&\sum_{i=1}^n\langle N(y(t),w_i(t))+[y(t),w_i(t)],w_i(t)
\rangle_{y(t)}\\&=&\mathrm{Tr}_\mathbb{R}N(y(t),\cdot)+
\mathrm{Tr}_\mathbb{R}\mathrm{ad}(y(t)).
\end{eqnarray*}
in which $y(t)=(L_{c(t)^{-1}})_*(\dot{c}(t))$ and $w_i(t)=(L_{c(t)^{-1}})_*(W_i(t))$ for each $i$. It verifies the S-curvature formula in Proposition 6.1 of \cite{Hu2015-1}, i.e.,
$$S(y)=\mathrm{Tr}_\mathbb{R}N(y,\cdot)+
\mathrm{Tr}_\mathbb{R}\mathrm{ad}(y).$$

\section{Nonlinear parallel translation along a smooth curve}
\subsection{Proof of Theorem \ref{main-thm-2}}
Let $G$ be a connected Lie group endowed with a left invariant
spray structure $\mathbf{G}=\mathbf{G}_0-\mathbf{H}=u^i\widetilde{U}_i-
\mathbf{H}^i\partial_{u^i}$  with the spray vector field
$\eta$, and $c(t)$ with $t\in(a,b)$ a smooth curve on $G$ which is simple (i.e., it has no self intersection) and has nonvanishing $\dot{c}(t)$
for all values of $t$.

%The horizonal lifting $\widetilde{\dot{c}(t)}^\mathcal{H}$ of $\dot{c}(t)$ is a smooth
We denote $\dot{c}(t)=w^i(t)U_i(c(t))$, then its horizonal lifting
$\widetilde{\dot{c}(t)}^\mathcal{H}
=w^i(t)\widetilde{U}^\mathcal{H}_i|_{T_{c(t)}G\backslash\{0\}}$
is a smooth tangent vector field on the imbedded submanifold
$S=\cup_{t\in(a,b)}(T_{c(t)}G\backslash\{0\})$ of $TG\backslash0$.
Lemma 3.2 in \cite{Xu2021} provides a formula for $\widetilde{U}^\mathcal{H}_i$, i.e.,
\begin{lemma} \label{lemma-3}
The horizonal lifting of $U_q$ is $\widetilde{U}^\mathcal{H}_q=\widetilde{U}_q
-(\tfrac12\tfrac{\partial}{\partial u^q}\mathbf{H}^i-\tfrac12 u^j c^i_{qj})\partial_{u^i}.$
\end{lemma}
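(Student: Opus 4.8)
The plan is to compute $\widetilde{U}^\mathcal{H}_q$ in a standard local coordinate $(x^i,y^i)$ and then repackage the result in the left-invariant frame $\{\widetilde{U}_i,\partial_{u^i}\}$, following the same bookkeeping already used for the covariant derivative in Lemma \ref{lemma-5}. The starting point is that the horizontal lift is linear over functions on the base and that the horizontal lift of the coordinate field $\partial_{x^i}$ is the field $\delta_{x^i}$ attached to the nonlinear connection of $\mathbf{G}$. Since $U_q=A^i_q\partial_{x^i}$ by (\ref{001}) with $A^i_q=A^i_q(x)$ depending only on the base point, I obtain at once $\widetilde{U}^\mathcal{H}_q=A^i_q\delta_{x^i}$, which is $A^i_q\partial_{x^i}$ minus an explicit vertical term built from the $N^i_j$.

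First I would use (\ref{002}) to replace the horizontal piece $A^i_q\partial_{x^i}$ by the complete lift $\widetilde{U}_q$, at the cost of one additional vertical term $-y^l\tfrac{\partial}{\partial x^l}A^k_q\partial_{y^k}$. After this step the field equals $\widetilde{U}_q$ plus a purely vertical remainder, so everything reduces to simplifying that remainder and rewriting it through $\partial_{u^i}=A^j_i\partial_{y^j}$ (equivalently $\partial_{y^k}=B^i_k\partial_{u^i}$). Inserting the formula for $N^i_j$ from Lemma \ref{lemma-1}, the term carrying $\tfrac12A^k_p\tfrac{\partial}{\partial y^j}\mathbf{H}^p$ contracts against the $A^i_q$ (using $A^i_q\partial_{y^i}=\partial_{u^q}$ and $A^k_pB^i_k=\delta^i_p$) to give precisely the $-\tfrac12\tfrac{\partial}{\partial u^q}\mathbf{H}^i\partial_{u^i}$ contribution of the claimed formula.

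The step I expect to be the main obstacle is the cancellation among the remaining terms involving first derivatives of the transition matrix $A$: the leftover $-y^l\tfrac{\partial}{\partial x^l}A^k_q$ from the complete lift together with the two $\partial A/\partial x$ terms hidden inside $N^i_j$ through Lemma \ref{lemma-1}. After collapsing one of them with $A^i_qB^p_i=\delta^p_q$ and trading $y^l$ for $u^jA^l_j$, these combine into an antisymmetrized expression of the shape $u^j\big(A^l_q\tfrac{\partial}{\partial x^l}A^k_j-A^l_j\tfrac{\partial}{\partial x^l}A^k_q\big)$. This is exactly the combination governed by Lemma \ref{lemma-0}, which turns it into $u^jc^p_{qj}A^k_p$; contracting the surviving $A^k_p$ against $B^i_k$ then produces the structure-constant term $+\tfrac12u^jc^i_{qj}\partial_{u^i}$. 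Collecting the two surviving vertical pieces yields the asserted identity $\widetilde{U}^\mathcal{H}_q=\widetilde{U}_q-(\tfrac12\tfrac{\partial}{\partial u^q}\mathbf{H}^i-\tfrac12u^jc^i_{qj})\partial_{u^i}$. As in Lemma \ref{lemma-5}, the computation is routine once the antisymmetrization is spotted, and the only genuine care needed is keeping the index positions consistent and invoking Lemma \ref{lemma-0} in the correct slot.
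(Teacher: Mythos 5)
Your proposal is correct and takes essentially the same route as the paper: the paper defers the proof to Lemma 3.2 in \cite{Xu2021}, noting only that it ``uses (\ref{002}) and a very similar calculation as (\ref{005})'', and your argument is exactly that calculation --- writing $\widetilde{U}^\mathcal{H}_q=A^i_q\delta_{x^i}$, substituting (\ref{002}) and Lemma \ref{lemma-1}, and collapsing the antisymmetrized $\partial A/\partial x$ terms via Lemma \ref{lemma-0} into the structure-constant term. The bookkeeping you outline closes the argument with nothing missing.
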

Its proof uses (\ref{002}), and a very similar calculation as (\ref{005}).

Using Lemma \ref{lemma-3} and (2) of lemma \ref{lemma-2}, we get
the decomposition
\begin{eqnarray}
\widetilde{\dot{c}(t)}^\mathcal{H}
&=&w^i(t)\widetilde{U}^\mathcal{H}_i|_{T_{c(t)}G\backslash\{0\}}
=w^i(t)(\widetilde{U}_i-
(\tfrac12\tfrac{\partial}{\partial u^i}\mathcal{H}^j-
\tfrac12u^kc^j_{ik})\partial_{u^j})|_{T_{c(t)}G\backslash\{0\}}
\nonumber\\
&=&w^i(t)\phi^j_i(c(t))\widetilde{V}_j
|_{T_{c(t)}G\backslash\{0\}}+
(\tfrac12w^i(t)u^pc^j_{pi}-\tfrac12
w^i(t)\tfrac{\partial}{\partial u^i}\mathcal{H}^j
)\partial_{u^j}|_{T_{c(t)}G\backslash\{0\}}\label{010}
\end{eqnarray}
at each $(c(t),y)\in S$ with $y=u^iU_i(c(t))\in T_{c(t)}G\backslash\{0\}$.
Both summands in the right side of (\ref{010}) are smooth vector fields on $N$. In particular,
the first one, $w^i(t)\phi^j_i(c(t))\widetilde{V}_j|_{T_{c(t)}G\backslash\{0\}}$ lifts $\dot{c}(t)$.

On $S$, we have the global coordinate $(t,u^1,\cdots,u^n)$ for $y=u^iU_i(c(t))\in T_{c(t)}M\backslash\{0\}$, and the corresponding global frame
$\{\partial_t,\partial_{u^1},\cdots,\partial_{u^n}\}$.  Using this frame, $\widetilde{\dot{c}(t)}^{\mathcal{H}}$ can be presented as
\begin{lemma}\label{lemma-4}
Using the global frame $\{\partial_t,\partial_{u^1},\cdots,\partial_{u^n}\}$ on $S$, we have
$$\widetilde{\dot{c}(t)}^\mathcal{H}=\partial_t+(-\tfrac12
w^i\tfrac{\partial}{\partial u^i}\mathcal{H}^j+\tfrac12w^i(t)u^pc^j_{pi}
)\partial_{u^j},$$
for each $(c(t),y)\in S$ with $y=u^iU_i(c(t))\in T_{c(t)}G\backslash\{0\}$.
\end{lemma}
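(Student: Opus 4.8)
The plan is to obtain $\widetilde{\dot{c}(t)}^{\mathcal{H}}$ by simply re-reading the decomposition (\ref{010}) in the global frame $\{\partial_t,\partial_{u^1},\dots,\partial_{u^n}\}$. Formula (\ref{010}) already writes $\widetilde{\dot{c}(t)}^{\mathcal{H}}=A+B$, where $A=w^i(t)\phi^j_i(c(t))\widetilde{V}_j|_S$ is the stated lift of $\dot{c}(t)$ and $B=(\tfrac12w^i(t)u^pc^j_{pi}-\tfrac12w^i(t)\tfrac{\partial}{\partial u^i}\mathcal{H}^j)\partial_{u^j}$ is a purely vertical field. Since $B$ already coincides (up to reordering the two summands) with the $\partial_{u^j}$-term asserted in the lemma, the entire content reduces to the single identification $A=\partial_t$ on $S$. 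So the whole proof is the verification of this one equality.

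To set this up I would use that, because $c$ is simple with nowhere-vanishing $\dot c$, the functions $(t,u^1,\dots,u^n)$ are genuine global coordinates on the embedded $S$, so any field on $S$ splits as $f^0\partial_t+f^q\partial_{u^q}$. Under the projection $\pi:S\to G$ one has $d\pi(\partial_{u^q})=0$ (these are vertical) and $d\pi(\partial_t)=\dot{c}(t)$, hence $d\pi(f^0\partial_t+f^q\partial_{u^q})=f^0\dot{c}(t)$. Now $A$ lifts $\dot{c}(t)$ — this is the remark following (\ref{010}), using $U_i=\phi^j_iV_j$ from (\ref{003}) — so its $\partial_t$-coefficient is forced to equal $1$ and $A=\partial_t+f^q\partial_{u^q}$. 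It therefore remains only to show that the vertical coefficients vanish, equivalently $A(u^k)=0$ for all $k$.

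The hard part is precisely this vanishing. I would first use Lemma \ref{lemma-2}(2) to rewrite $A$ in the left-invariant frame: from $\widetilde{U}_i=\phi^j_i\widetilde{V}_j+c^q_{pi}u^p\partial_{u^q}$ one gets $A=w^i\widetilde{U}_i-w^i(t)u^pc^q_{pi}\partial_{u^q}$. Because the flow of the complete lift $\widetilde{V}_j$ is the differential of a left translation, which preserves each coordinate $u^k$ by left-invariance of the $U_i$, one has $\widetilde{V}_j(u^k)=0$; feeding this back through Lemma \ref{lemma-2}(2) gives $\widetilde{U}_i(u^k)=c^k_{pi}u^p$. Hence
\begin{equation*}
A(u^k)=w^i\widetilde{U}_i(u^k)-w^i(t)u^pc^q_{pi}\,\partial_{u^q}(u^k)=w^ic^k_{pi}u^p-w^i(t)u^pc^k_{pi}=0,
\end{equation*}
as needed. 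A fully self-contained alternative, avoiding the flow argument, is to push the coordinate field $\partial_t$ forward under the embedding $\iota:(t,u)\mapsto u^iU_i(c(t))$ in a standard chart: the base parts of $\iota_*\partial_t$ and of $w^i\widetilde{U}_i$ agree automatically, while their $\partial_{y^k}$-components differ exactly by $u^aw^b\bigl(A^l_b\tfrac{\partial}{\partial x^l}A^k_a-A^l_a\tfrac{\partial}{\partial x^l}A^k_b\bigr)\partial_{y^k}$, which by Lemma \ref{lemma-0} collapses to $u^aw^bc^p_{ba}A^k_p\partial_{y^k}=-w^i(t)u^pc^q_{pi}\partial_{u^q}$, again yielding $\iota_*\partial_t=A$.

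Combining $A=\partial_t$ with the already-computed vertical term $B$ of (\ref{010}) gives
\begin{equation*}
\widetilde{\dot{c}(t)}^{\mathcal{H}}=\partial_t+\Bigl(-\tfrac12w^i\tfrac{\partial}{\partial u^i}\mathcal{H}^j+\tfrac12w^i(t)u^pc^j_{pi}\Bigr)\partial_{u^j},
\end{equation*}
which is the assertion. The only genuine obstacle is the index bookkeeping in the vertical term: the cancellation making $A(u^k)=0$ rests entirely on the structure-constant identity of Lemma \ref{lemma-0} (or, in the conceptual route, on left-invariance forcing $\widetilde{V}_j(u^k)=0$), and everything else is formal.
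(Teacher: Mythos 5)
Your proposal is correct and follows essentially the same route as the paper: both reduce the lemma via the decomposition (\ref{010}) to the single identity $\partial_t=w^i(t)\phi^j_i(c(t))\widetilde{V}_j|_S$, and both establish it from the two facts that this field lifts $\dot{c}(t)$ and that left-invariance forces $\widetilde{V}_j(u^k)=0$, concluding by uniqueness of such a lift (your coordinate splitting $f^0\partial_t+f^q\partial_{u^q}$ is just an explicit form of that uniqueness argument). Your detour through Lemma \ref{lemma-2}(2) to compute $A(u^k)$ is harmless but unnecessary, since $\widetilde{V}_j(u^k)=0$ applied directly to $A=w^i\phi^j_i\widetilde{V}_j$ already gives $A(u^k)=0$, which is exactly what the paper does.
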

 \begin{proof} We only need to prove $\partial_t=w^i(t)\phi^j_i(c(t))\widetilde{V}_j
|_{T_{c(t)}G\backslash\{0\}}$ on $N$. Notice that
the smooth vector field on $N$ which lifts
$\dot{c}(t)$ and keeps all $u^i$'s invariant is unique.
Obviously $\partial_t$ on $N$ is such a lifting.
The left invariancy of the $u^i$'s implies
$\widetilde{V}_ju^i=0,\forall i,j$. Together with (\ref{010}), it implies $w^i(t)\phi^j_i(c(t))\widetilde{V}_j|_{T_{c(t)}G\backslash\{0\}}$ is also such a lifting. These two liftings must be the same.
\end{proof}\bigskip

 \begin{proof}[Proof of Theorem \ref{main-thm-2}]
Since Theorem \ref{main-thm-2} is a local result, we only need to prove it in the case that $c(t)$  is a simple smooth curve. We may further assume $c(t)$ is defined for $t\in(a,b)$ and it has nonvanishing $\dot{c}(t)$ everywhere, because continuity can help us with the rest.

Using the global coordinate $(t,u^1,\cdots,u^n)$ on $S$,
a curve $(c(t),Y(t))$ in $S$ with $Y(t)=u^i(t)U_i(c(t))\in T_{c(t)}G\backslash\{0\}$ can be represented as $(t,u^1(t),\cdots,u^n(t))$. Then Lemma \ref{lemma-4} indicates that
$Y(t)$ is nonlinearly parallel, i.e., $(c(t),Y(t))$ is an integral curve of $\widetilde{\dot{c}(t)}^\mathcal{H}$, iff
\begin{equation}\label{014}
\tfrac{{\rm d}}{{\rm d}t}u^j(t)=\tfrac12w^i(t)u^pc^j_{pi}-\tfrac12
w^i\tfrac{\partial}{\partial u^i}\mathcal{H}^j,\quad\forall j.
\end{equation}
Using left translations, i.e., $y(t)=(L_{c(t)^{-1}})_*$ $(Y(t))=u^i(t)e_i$ and $w(t)=(L_{c(t)^{-1}})_*(\dot{c}(t))=w^i(t)e_i$, (\ref{014}) is translated to
$\tfrac{{\rm d}}{{\rm d}t}y(t)+N(y(t),w(t))=0$.
This ends the proof.
\end{proof}

\subsection{Two questions related to Landsberg Conjecture and holonomy}

\label{subsection-4-2}

Theorem \ref{main-thm-2} and Theorem \ref{cor} imply for any left invariant spray structure $\mathbf{G}$ on the Lie group $G$,
the following space of smooth vector fields on $\mathfrak{g}\backslash\{0\}$,
$\mathcal{N}=\{N(\cdot,w),\forall w\in\mathfrak{g}\}$,
contains all information of nonlinear parallel translations.
Naturally we would like to know more about the Lie algebra $\mathfrak{H}$ that $\mathcal{N}$ generates using the canonical bracket between smooth vector fields.

Here are some examples.
When $\mathbf{G}=\mathbf{G}_0=u^i\widetilde{U}_i$, we have $\eta=0$ and $N(y,w)=-\tfrac12[y,w]$. It is easy to check $[-2N(\cdot,w_1),-2N(\cdot,w_2)]=-2N(\cdot,[w_1,w_2])$, so in this case $\mathfrak{H}=\mathcal{N}$
is isomorphic to $\mathfrak{g}/\mathfrak{c}(\mathfrak{g})$. More generally, when the left invariant spray structure $\mathbf{G}$ is affine (see Definition 6.1.1 in \cite{Sh2001-1}), the associated spray vector field $\eta$ is quadratic. Then $\mathfrak{H}$ is a finite dimensional subalgebra in $\mathfrak{gl}(\mathfrak{g},\mathbb{R})$.

Above examples suggest we ask
\begin{question}\label{que-1}
Is there an example of left invariant spray structure
$\mathbf{G}$ such that $\mathbf{G}$ is not affine and $\dim\mathfrak{H}$ is finite?
\end{question}

Finsler geometry provides another motivation for Question
\ref{que-1}.
In Finsler geometry, a metric is called a {\it Berwald metric} if its induced spray structure is affine \cite{Sh2001-2},
and it is called a {\it Landsberg metric} if all
nonlinearly parallel translations are isometries for the Hessian metrics on the punctured tangent spaces \cite{Ic1978}. {\it Landsberg Problem} asks if there
exists a (regular) Landsberg metric which is not Berwald \cite{Ma1996}. See \cite{TN2021}\cite{XM2021} and the references therein for some recent progress on this problem. Theorem \ref{cor} implies that, if $\mathbf{G}$ is induced by a left invariant Landsberg metric, then $\mathfrak{H}$
is a Lie subalgebra in the space of all Killing vector fields for the Hessian metric of $F(e,\cdot)$ on $\mathfrak{g}\backslash\{0\}$, which must have a finite dimension. So Question \ref{que-1} may be viewed as a generalization for Landsberg Problem in the left invariant spray geometry.

Another natural question for $\mathfrak{H}$ is the following.

\begin{question}\label{que-2}
What is the relation between the Lie algebra $\mathfrak{H}$ and the
restricted holonomy group of $(G,\mathbf{G})$.
\end{question}

The restricted holonomy group $\mathrm{Hol}_0(G,\mathbf{G})$ of $(G,\mathbf{G})$ is the subgroup
of $\mathrm{Diff}(\mathfrak{g}\backslash0)$
generated by $\mathbf{P}^{\mathrm{nl}}_{c(0),c(1);c}$ for all
piecewise smooth curves $c(t):[0,1]\rightarrow G$ which are homotopic to a constant map and satisfy $c(0)=c(1)=e$.

When the left invariant spray structure $\mathbf{G}$ is induced by
a Riemannian metric $F$ on the Lie group $G$, $\mathrm{Hol}_0(G,\mathbf{G})$ is a compact Lie group.
Comparing Lemma 2.2 in \cite{Ko1957} and Definition 4 in \cite{Hu2015-2}, then we see that $N(\cdot,v)$ coincides with the linear operator $a_v$ in \cite{Ko1955} for each $v\in\mathfrak{g}$. So in this case, Theorem 4.5 in \cite{Ko1955} indicates $\mathfrak{H}=\mathrm{Lie}(\mathrm{Hol}_0(G,\mathbf{G}))$ when either $G$ is compact or $(G,F)$ is irreducible with nonvanishing Ricci curvature.

However, when the left invariant $\mathfrak{H}$ is more generic, very likely both $\mathfrak{H}$ and $\mathrm{Hol}_0(G,\mathbf{G})$ have infinite dimensions \cite{HMM2020}, making Question \ref{que-2} is much harder in this situation. \bigskip

\noindent
{\bf Acknowledgement}.
This paper is supported by Beijing Natural Science Foundation
(Z180004), National Natural Science
Foundation of China (12131012, 11821101). The author sincerely thanks Ming Li for helpful discussions.


\begin{thebibliography}{99}
\bibitem{Be1926} L. Berwald, Untersuchung der Kr\"{u}mmung allgemeiner metrischer
    R\"{a}ume auf Grund des in ihnen herrschenden Parallelismus, Math. Z.
    {\bf25} (1926), 40-73.

\bibitem{Hu2015-1} L. Huang, On the fundamental equations of homogeneous Finsler spaces, Diff. Geom. Appl. {\bf40} (2015), 187-208.

\bibitem{Hu2015-2} Ricci curvature of left invariant Finsler metrics on Lie groups, Israel J. Math. {\bf207} (2) (2015), 783-192.

\bibitem{Hu2017} L. Huang, Flag curvatures of homogeneous Finsler spaces, Euro. J. Math., S.I. {\bf3} (4) (2017), 1000-1029.

\bibitem{HM2021} L. Huang and X. Mo, Inverse problem of left invariant sprays on Lie groups, Internat. J. Math. (2021), doi:10.1142/S0129167X21500762.

\bibitem{HMM2020} B. Hubicska, V.S. Matveev and Z. Muzsnay,
Almost all Finsler metrics have infinite dimensional holonomy group,
J. Geom. Anal. {\bf31} (2021), 6067-6079.

\bibitem{Ic1978} Y. Ichijy\={o}, On special Finsler connections with vanishing hv-curvature tensor, Tensor, N.S. {\bf32} (1978), 146-155.

\bibitem{Ko1955} B. Kostant, Holonomy and the Lie algebra of infinitesimal motions of a Riemannian manifold, Trans. Amer. Math. Soc. {\bf80} (2) (1955), 528-542.

\bibitem{Ko1957} B. Kostant, On holonomy and homogeneous spaces, Nagoya Math. J. {\bf12} (1957), 31-54.

%\bibitem{Ko2000} L. Kozma, On holonomy groups of Landsberg manifolds, Tensor (N.S.) {\bf62} (2000), 87-90.

\bibitem{LMY2019} Y. Li, X. Mo and Y. Yu, Inverse problem of sprays with scalar curvature, Internat. J. Math. {\bf30} (2019), 1950041.

\bibitem{LS2018} B. Li and Z. Shen, Sprays of isotropic curvature, Int. J. Mah. {\bf29} (1) (2018), 1850003.

\bibitem{Ma1996} M. Matsumoto, Remarks on Berwald and Landsberg spaces, Finsler geometry (Seattle, WA, 1995), 79-82, Contemp. Math. {\bf196}, Amer. Math. Soc., Providence, RI, 1996.

\bibitem{Sh2001-1} Z. Shen, Differential Geometry of Spray and Finsler Spaces, Kluwer Academic Publishers, (2001).

\bibitem{Sh2001-2} Z. Shen, Lectures on Finsler Geometry, World Scientific, (2001).

\bibitem{TN2021} A. Tayebi and B. Najafi, On homogeneous Landsberg surfaces, J. Geom. Phys. (2021), doi:10.1016/ j.geomphys.2021.104314.

\bibitem{Xu2021} M. Xu, Left invariant spray structure on Lie group, preprint (2021), arXiv:2103.08901v2.

\bibitem{XM2021} M. Xu and V.S. Matveev, Proof of Laugwitz Conjecture and Landsberg Unicorn Conjecture for Minkowski norms with $SO(k)\times SO(n-k)$-symmetry, Canad. J. Math. (2021),
    doi:10.4153/S0008414X21000304.

\bibitem{XMYZ2020} M. Xu, V.S. Matveev, K. Yan and S. Zhang, Some geometric correspondences for homothetic navigation, Publ. Math. Debrecen {\bf97}
    (3-4) (2020), 449-474.

\bibitem{Ya2011} G. Yang, Some classes of sprays in projective spray geometry, Diff. Geom. Appl. {\bf29} (2011), 601-614.

\end{thebibliography}
\end{document}